\numberwithin{equation}{section}
\theoremstyle{plain}
\newtheorem{theorem}{Theorem}[section]
\newtheorem{lemma}[theorem]{Lemma}
\newtheorem{proposition}[theorem]{Proposition}
\theoremstyle{definition}
\newtheorem{remark}[theorem]{Remark}
\newtheorem{example}[theorem]{Example}
\def\beqn{\begin{equation}}
\def\beqn*{$$}
\def\eeqn{\end{equation}}
\def\eeqn*{$$}
\newcommand{\BX}{{\bf X}}
\newcommand{\id}{infinitely divisible}
\newcommand{\reals}{{\mathbb R}}
\newcommand{\bbr}{\reals}
\newcommand{\bbn}{{\mathbb N}}
\newcommand{\bbz}{\protect{\mathbb Z}}
\newcommand{\one}{{\bf 1}}
\begin{document}

\bibliographystyle{ecta}

\title[Sample Covariance for Infinitely Divisible Processes]
{Limit Theory for  the Sample Covariance for Heavy Tailed 
  Stationary Infinitely Divisible Processes Generated by Conservative Flows}
\author{Takashi Owada}
\address{School of Operations Research and Information Engineering\\
Cornell University \\
Ithaca, NY 14853}
\email{to68@cornell.edu}

\thanks{This research was partially supported by the ARO
grants W911NF-07-1-0078 and W911NF-12-10385,  NSF grant  DMS-1005903
and NSA grant  H98230-11-1-0154  at Cornell University.}

\subjclass{Primary 60F17, 60G18. Secondary 37A40, 60G52 }
\keywords{infinitely divisible process, conservative flow, pointwise dual ergodicity,   Darling-Kac theorem 
\vspace{.5ex}}

\begin{abstract}
This study aims to develop the limit theorems on the sample autocovariances and sample autocorrelations for certain stationary infinitely divisible processes. We consider the case where the infinitely divisible process has heavy tail marginals and is generated by a conservative flow. Interestingly, the growth rate of the sample autocovariances is determined by not only heavy tailedness of the marginals but also memory length of the process. Although this feature was first observed by \cite{resnick:samorodnitsky:xue:2000} for some very specific processes, we will propose a more general framework from the viewpoint of infinite ergodic theory. Consequently, the asymptotics of the sample autocovariances can be more comprehensively discussed. 
\end{abstract}

\maketitle

\section{Introduction} \label{sec:intro}

For a discrete strict stationary process $(X_n, \, n \geq 1)$ (i.e., the joint distributions $(X_n, \, n \geq 1)$ and $(X_{n+h}, \, n \geq 1)$ are the same for all positive integers $n_1, \dots, n_k$ and $h$), the sample autocovariance function and the sample autocorrelation function are vital statistics in the analysis of dependence structure of the process. According to the Wold decomposition (see p. 187 in \cite{brockwell:davis:1991}), every strict stationary process with zero mean and finite variance can be represented by the sum of an infinite-order moving average (or equivalently, an ARMA($p,q$) process of finite order) and a perfectly predictable process. Thus, in a classical $L^2$-context, linear models are sufficient for data analysis; indeed, the sample autocorrelation function has traditionally been an important model-fitting and diagnostic tools (see, for example, Chapter 7 of \cite{brockwell:davis:1991}). 

If strict stationary processes lack finite variance, they cannot generally be approximated by linear processes. Thus, it is natural to question whether classical methods based on sample autocorrelations are still plausible. For instance, a major feature of heavy tail models is that the sample autocorrelation converges to a {\it random} limit. If a random limit actually occurs, one needs to be more careful in applying traditional model-fitting and diagnostic tools such as the Akaike Information Criterion or Yule-Walker estimators. For more details, see \cite{davis:resnick:1996}, \cite{resnick:vandenberg:2000} and \cite{resnick:samorodnitsky:xue:1999}. 

To determine the limit behavior of the sample autocovariances of strict stationary processes with infinite variance, it is also important to see how rapidly the sample autocovariances grow. Many studies have revealed that if the tail of a marginal distribution is regularly varying with index $-\alpha$ for some $0<\alpha<2$, then a proper normalizing sequence $(c_n)$ for the sample autocovariances may be written as $c_n=n^{1-2/\alpha}L(n)$, where $L(n)$ is a slowly varying function. Among the processes that possess such type of normalizing sequence are the linear process whose noise distribution has a balanced regularly varying tail (\cite{davis:resnick:1986}), the bilinear process (\cite{davis:resnick:1996}, \cite{resnick:vandenberg:2000}), certain ARCH processes (\cite{davis:mikosch:1998}) and $\alpha$-stable moving average processes (\cite{resnick:samorodnitsky:xue:1999}). 

\cite{resnick:samorodnitsky:xue:2000} reported an interesting phenomenon with respect to the growing rate of the sample autocovariance. They considered a process of the form
\begin{equation}  \label{e:mc.proc}
X_n = \int_{\bbz^{\bbn}} f \circ T^n (x) dM(x)\,,
\end{equation}
where $M$ is a symmetric $\alpha$-stable random measure defined on $(\bbz^{\bbn}, \mathcal{B}(\bbz^{\bbn}))$, and $T(x_0,x_1,\dots) = (x_1,x_2,\dots)$ is the left shift map defined on $\mathbb{Z}^\bbn$. Furthermore, $M$ is assumed to have a control measure of the form $\mu(A) = \sum_{i \in \mathbb{Z}} \pi_i P_i(A)$, where $P_i(\cdot)$ is a probability law of an irreducible, null recurrent Markov chain with state space $\mathbb{Z}$, and $( \pi_i )$ is its unique (up to multiplicative factors) $\sigma$-finite and invariant measure. By introducing an extra parameter $0 \leq \beta \leq 1$, they proved that a proper normalizing sequence in this situation is $c_n = n^{(1-\beta)(1-\alpha/2)} L(n)$. 
The parameter $\beta$ accounts for the significantly longer memory of this process, relative to the other processes described in the previous paragraph; more details can be found in \cite{samorodnitsky:2005}. 

An obvious drawback of the process given in \eqref{e:mc.proc} is the highly specific form of the process and its control measure. In this paper, we propose a more general framework inspired by the infinite ergodic theory, in which the asymptotics of the sample autocovariances can be more comprehensively assessed. In terms of the growth rate of the sample autocovariance and its weak limit, we will demonstrate that results similar to those of \cite{resnick:samorodnitsky:xue:2000} are obtainable in the generalized framework. 

In Section \ref{sec:ergodic.theory}, we will overview the basic concepts of infinite ergodic theory that applies in this paper. Section \ref{sec:main} is the main body of this paper and provides the limit theorems on the sample autocovariances and the sample autocorrelations for the process of our interest. All supplemental results necessary for the proof are collected in Section 4, and exploiting these results, Section 5 completes the proof. This paper also covers three examples: the first one treats once again the process in \eqref{e:mc.proc} under the generalized framework. The other two examples are related to certain ergodic dynamical systems depicted by the so-called basic AFN-map and $S$-unimodal map, respectively. 

Given a strict stationary process $\BX = (X_n, \, n \geq 1)$, the sample autocovariance is denoted by 
$$
\widehat{\gamma}_n(h) = \frac{1}{n} \sum_{k=1}^n X_k X_{k+h}, \ \ \ h=0,1,2,\dots,
$$
and the sample autocorrelation function by $\widehat{\rho}_n(h) = \widehat{\gamma}_n(h) / \widehat{\gamma}_n(0)$, $h=0,1,2,\dots$. Given a measure space $(E,\mathcal{E},\mu)$ on which an operator $T:E \to E$ is defined, a partial sum related to a measurable function $h: E \to \bbr$ is written as
$$
S_n(h)(x) = \sum_{k=1}^n h \circ T^k(x)\,, \ \ x \in E\,.
$$
Throughout the paper, the convergence $\Rightarrow$ means weak convergence, and $RV_{\gamma}$ represents a family of regularly varying functions with exponent $\gamma \in \bbr$. Since our interest always lies in a discrete strict stationary process, we simply call it a stationary process.

\section{Ergodic Theoretical Notions}  \label{sec:ergodic.theory}

In this section, we will present the basic notions on ergodic theory used in the sequel. For further studies, the main references are \cite{krengel:1985}, \cite{aaronson:1997}, and \cite{zweimuller:2009}. 

Let $(E,\mathcal{E},\mu)$ be a $\sigma$-finite, infinite measure space. We will often denote $A = B$ mod $\mu$ for $A, B \in \mathcal{E}$ when $\mu(A\triangle B)=0$. 

Let $T:E \to E$ be a measurable map. $T$ is called {\it ergodic} if any (almost) invariant set $A$ with respect to $T$ (i.e., $T^{-1}A = A$ mod $\mu$) satisfies $\mu(A)=0$ or $\mu(A^c)=0$.

The map $T$ is said to be {\it conservative} if 
$$
\sum_{n=1}^{\infty} \one_A \circ T^n = \infty \ \ \text{a.e. on } A
$$
for any $A \in \mathcal{E}$, $0 < \mu(A) < \infty$. When the whole sequence $(T^n)$ gets involved, $(T^n)$ is particularly called a {\it flow}. 

In view of the Hopf decomposition (see e.g., Theorem 3.2 in \cite{krengel:1985}), any state space $E$ can be partitioned into two measurable invariant subsets $C$ and $D$, such that the map $T$ is conservative on $C$ and $D=E \setminus C$. We usually refer to $C$ and $D$ as a conservative part and a dissipative part, respectively. From its definition, $C$ is viewed as a set such that, departing from an arbitrary $A \subseteq C$, one could keep coming back to $A$ infinitely often. On the contrary, even if starting from $A \subseteq D$, one may not come back to $A$ quite often. 

Next we define a {\it dual operator} $\widehat{T}: L^1(\mu) \to L^1(\mu)$ by 
$$
\widehat{T} f = \frac{d \bigl(\nu_f \circ T^{-1}\bigr)}{d \mu}\,,
$$
where $\nu_f$ is a signed measure defined by $\nu_f(A) = \int_{A} f d\mu$, $A \in \mathcal{E}$. It is worth providing the dual relation
\begin{equation} \label{e:dual.rel}
\int_E   \widehat{T} f\cdot g\, d\mu = \int_E f\cdot g \circ T\, d\mu
\end{equation}
for $f\in L^1(\mu), \, g\in L^\infty(\mu)$. Note that, for any nonnegative measurable function $f$ on $E$, a similar definition gives a nonnegative measurable function $\widehat{T} f$, and that \eqref{e:dual.rel} holds for any two nonnegative measurable functions $f$ and $g$. 

A conservative ergodic and measure preserving map $T$ is said to be {\it pointwise dual ergodic}, if there exists a normalizing sequence $a_n \nearrow \infty$ such that as $n \to \infty$,
\begin{equation}  \label{e:pde}
\frac{1}{a_n} \sum_{k=1}^n \widehat{T}^k f \to \mu(f) \ \ \ \text{a.e.} \; \text{for every } f \in L^1(\mu)\,. 
\end{equation}
We often require that the above convergence takes place uniformly on a set of finite measure. Let $A \in \mathcal{E}$ with $0 < \mu(A) < \infty$. $A$ is said to be a uniform set for a conservative ergodic and measure preserving map $T$, if there exist a normalizing sequence $a_n \nearrow \infty$ and a nonnegative measurable function $f \in L^1(\mu)$ such that as $n \to \infty$,
\begin{equation}  \label{e:uniform_set}
\frac{1}{a_n} \sum_{k=1}^n \widehat{T}^k f \to \mu(f) \ \ \ \text{uniformly, a.e. on } A\,. 
\end{equation}
If a measurable function $f$ in \eqref{e:uniform_set} can be replaced by an indicator function $\one_A$, the set $A$ is particularly called {\it a Darling-Kac set}. From the similar argument as the proof of Proposition 3.7.5 in \cite{aaronson:1997}, one can see that $T$ is pointwise dual ergodic if and only if $T$ admits a uniform set. It is important to note that it is legitimate to use the same sequence $(a_n)$ both in (\ref{e:pde}) and (\ref{e:uniform_set}).

We often need to put a more strict assumption than \eqref{e:uniform_set}. Let $A \in \mathcal{E}$ with $0 < \mu(A) < \infty$. $A$ is said to be {\it a uniformly returning set} for a conservative ergodic and measure preserving map $T$, if there exist a normalizing sequence $b_n \nearrow \infty$ and a nonnegative measurable function $f \in L^1(\mu)$ such that as $n \to \infty$,
\begin{equation}  \label{e:def_uniformly_returning_set}
b_n \widehat{T}^n f \to \mu(f) \ \ \ \text{uniformly, a.e. on } A\,. 
\end{equation}
Clearly any uniformly returning set is a uniform set. Further information on uniformly returning sets is given, for example, in \cite{kessebohmer:slassi:2007}.

Given a uniform set (or a Darling-Kac set or a uniformly returning set) $A$, a natural question is how often the set $A$ will be visited as we evolve along the flow $( T^n )$. Such frequency is usually measured by a wandering rate 
\begin{equation}  \label{e:wanderingrate}
w_n = \mu \Bigl(\bigcup_{k=0}^{n-1} T^{-k}A\Bigr)\,. 
\end{equation}
There are some other alternative expressions for \eqref{e:wanderingrate}. To get those alternatives, we define the first entrance time to $A$
$$
\varphi(x) = \min \{ n \geq 1: T^n x \in A \}\,.
$$
(Notice that $\varphi < \infty$ a.e. on $E$, if $T$ is conservative ergodic and measure preserving.) It is elementary to prove that $\mu(A \cap \{ \varphi > k \}) = \mu(A^c \cap \{ \varphi = k \})$, $k \geq 1$. Therefore, we get 
$$
w_n = \mu(A) + \sum_{k=1}^{n-1} \mu (A^c \cap \{ \varphi = k  \}) = \sum_{k=0}^{n-1} \mu(A \cap \{ \varphi > k \})\,.
$$
This, in turn, implies
\begin{equation}  \label{e:equivalent_wandering}
w_n \sim \mu(\varphi < n) \ \ \ \text{as } n  \to \infty\,. 
\end{equation}

Let $T$ be a pointwise dual ergodic map and let $A$ be a uniform set determined by $T$. Then there is a precise connection between the return sequence $(w_n)$ and the normalizing sequence
$(a_n)$ in \eqref{e:uniform_set} (and, hence, also in \eqref{e:pde}), if regular variation is assumed. In fact, if either $(w_n) \in RV_{1-\beta}$ or $(a_n) \in RV_{\beta}$ for some
$\beta \in [0,1]$, then  
\begin{equation}  \label{e:prop3.8.7} 
a_n \sim \frac{1}{\Gamma(2-\beta) \Gamma(1+\beta)} \frac{n}{w_n} \ \ \
\text{as } n \to \infty\,. 
\end{equation}
Indeed, Proposition 7 in \cite{zweimuller:2009} gives the proof of this statement when $T$ has a Darling-Kac set, but this can be easily generalized to the case when $T$ is a pointwise dual ergodic map. 

Analogously, a similar kind of connection between $(w_n)$ and $(b_n)$ in \eqref{e:def_uniformly_returning_set} was shown by \cite{kessebohmer:slassi:2007}. If either $(w_n) \in RV_{1-\beta}$ or $(b_n) \in RV_{1-\beta}$ for some $\beta \in (0,1]$, then
\begin{equation}  \label{e:bn_wn}
b_n \sim \Gamma(\beta) \Gamma(2-\beta) w_n \ \ \ \text{as } n \to \infty\,.
\end{equation}

\section{Limit Theorem on the Sample Autocovariances} \label{sec:main}

This section presents the main limit theorem on the sample autocovariances and the sample autocorrelations for heavy-tailed infinitely divisible processes, which will be rigorously formulated soon.  
We consider an infinitely divisible process
\begin{equation}  \label{e:def_Xn}
X_n = \int_E f \circ T^n(x) dM(x)\,, \ \ \ n=1,2,\dots,
\end{equation}
where $M$ is an independently scattered \id\ random measure on a measurable space $(E,\mathcal{E})$. The random measure $M$ is assumed to be homogeneous symmetric and have a local L\'evy measure $\rho$ and a $\sigma$-finite {\it infinite} control measure $\mu$. We assume, throughout the paper, that a Gaussian component is identically zero. By these assumptions on the random measure $M$, we may write, for every $A \in \mathcal{E}$ of finite $\mu$-measure,
$$
E e^{iu M(A)} = \exp\left\{ -\mu(A) \int_{\mathbb{R}} \bigl(1-\cos(ux)\bigr)\, \rho(dx) \right\}\, \ \ \ u \in \bbr.
$$

One of the central assumptions in our work is the heavy tailedness of the process $\BX=(X_1,X_2,\dots)$. We assume that $\rho$ has a regularly varying tail with index $-\alpha$, $0 < \alpha < 2$:
\begin{equation}  \label{e:RV_Levy}
\rho(\cdot, \infty) \in RV_{-\alpha} \ \ \text{at infinity.}
\end{equation}
In what follows, in order to emphasize the dependence on the tail parameter $\alpha$, we express $\rho$ by $\rho_{\alpha}$. 
An extra assumption will be added on the lower tail of $\rho_{\alpha}$: for some $p_0 \in (0,2)$, 
\begin{equation}  \label{e:lower_tail_Levy}
x^{p_0} \rho_{\alpha}(x,\infty) \to 0 \ \ \text{as } x \downarrow 0\,.
\end{equation}

The other crucial assumption is that the process $\BX$ is generated by a conservative flow. A conservative flow is known to be related to long memory in the process $\BX$; the length of memory observed in $\BX$ is significantly longer than that in the process generated by a dissipative flow (e.g. $\alpha$-stable moving average processes). See for example, \cite{samorodnitsky:2004} and \cite{roy:2008}. With this concept in view, let $T$ be a conservative ergodic and measure preserving map on $(E,\mathcal{E},\mu)$. Furthermore, $T$ is pointwise dual ergodic, and hence, $T$ admits some uniform set $A \in \mathcal{E}$ with $0 < \mu(A) < \infty$. We suppose that the normalizing sequence $(a_n)$ for pointwise dual ergodicity is regularly varying with exponent $0 \leq \beta < 1$. A certain pointwise dual ergodic map $T$ (e.g., Markov shift operator; see Section 4.5 in \cite{aaronson:1997}) is known to satisfy
$$
\widehat{T}^k \one_{A^c \cap \{ \varphi = k \} }(x) = \mu \bigl( A^c \cap \{ \varphi = k \} \bigr), \ \ \ \text{for all } x \in A, \ \ k \geq 1\,,
$$
where $\varphi(x) = \min \{ n \geq 1: T^n x \in A \}$ is the first entrance time to $A$. 
We generalize this condition by assuming that 
\begin{equation}  \label{e:generalize_Markov_shift}
\frac{1}{\mu(\varphi \leq n)} \sum_{k=1}^n \widehat{T}^k \one_{A^c \cap \{\varphi = k \} } (x) \ \ \text{is uniformly bounded on } A\,.
\end{equation}

Let $f:E \to \bbr$ be a measurable function whose support is contained in $A$, that is, supp$(f) \subset A$. We often write $f_n(x) = f \circ T^n(x)$, $x \in E$. Moreover, we put the integrability condition 
\begin{equation}  \label{e:integrability}
f \in L^2(\mu) \ \ \text{with } \ \ \mu(f^2) = \int_E f(x)^2 \mu(dx) >0\,. 
\end{equation}

Now, the process $\BX = (X_1,X_2,\dots)$ in \eqref{e:def_Xn} turns out to be a well-defined stationary \id\ process with L\'evy measure of $X_n$ given by 
$$
(\rho_{\alpha} \times \mu)\{(x,s): xf_n(s) \in \cdot  \}\,.
$$ 
See \cite{rajput:rosinski:1989} for further information on spectral representations of infinitely divisible processes. 
We can see from \cite{rosinski:samorodnitsky:1993} that the tail of $X_n$ is asymptotically the same as the tail of its L\'evy measure. That is,
$$
P(X_n>\lambda)\sim \Bigl( \int_E |f(s)|^\alpha\,
\mu(ds)\Bigr) \rho_{\alpha} (\lambda,\infty) \  \ \ \text{as } \lambda\to\infty\,.
$$
Due to regular variation of $\rho_{\alpha}$, this implies that the process $\BX$ belongs to the domain of attraction of a symmetric $\alpha$-stable law.

The argument for the proof of our main limit theorem will be separated into two cases. First, we discuss the case when $\alpha$ and $\beta$ lie in the range 
\begin{equation}  \label{e:first_range}
\text{either } 1 < \alpha < 2, \, 0 \leq \beta < 1 \ \ \text{or } \ 0 < \alpha \leq 1, \, 0 \leq \beta < 1/(2-\alpha)\,.
\end{equation}
As compared with this parameter range, the complement of \eqref{e:first_range}, $0 <\alpha \leq 1$ and $1/(2-\alpha) \leq \beta < 1$, is unfortunately more difficult to handle. 
One of the possible ways to overcome this difficulty is to assume that the product map $T \times T$ still has {\it nice properties}. Namely, we could assume that $T \times T$ is still a pointwise dual ergodic map on a measure space $(E \times E, \mathcal{E} \times \mathcal{E}, \mu \times \mu)$. However, as will be studied later, $T \times T$ is not necessarily pointwise dual ergodic. In that case, alternatively, we could put more stringent assumption on the set $A$; that is, $A$ is assumed to be a uniformly returning set for $\one_A$, i.e., there exists an increasing normalizing sequence $(b_n)$ such that as $n \to \infty$,
\begin{equation}  \label{e:uniformly_returning_set}
b_n \widehat{T}^n \one_A \to \mu(A) \ \ \ \textrm{uniformly, a.e. on } A\,. 
\end{equation}

Before stating the main limit theorem, we would like to determine a normalizing sequence $(c_n)$ which enables us to capture how rapidly the sample autocovariances of the process $\BX$ grow. To this end, we need some preliminary work. 
For $0 < \beta < 1$, let $( S_{\beta}(t), t \geq 0 )$ be a $\beta$-stable subordinator, i.e., a stable L\'evy process with increasing sample path. Assume that the moment generating function of $(S_{\beta}(t))$ is given by $E \exp \{-\theta S_{\beta}(t) \} = \exp \{ -t \theta^{\beta} \}$ for $\theta > 0$ and $t \geq 0$. Define its inverse process by
\begin{equation}  \label{e:MLprocess}
M_{\beta}(t) = S_{\beta}^{\leftarrow}(t) =  \inf \{u \geq 0: S_{\beta}(u) \geq t  \}, \ \ \ t \geq 0\,. 
\end{equation}
The process $( M_{\beta}(t) )$ is called the Mittag-Leffler process with index $\beta$ because the moment generating function of $( M_{\beta}(t) )$ is given by the Mittag-Leffler function
\begin{equation}  \label{e:mgf_Mittag_Leffler}
E \exp \{ \theta M_{\beta}(t) \} = \sum_{n=0}^{\infty} \frac{(\theta t^{\beta})^n }{\Gamma(1+n \beta)}, \ \ \ \theta \in \mathbb{R}; 
\end{equation}
see Proposition 1(a) in \cite{bingham:1971}. 
The Mittag-Leffler process is well-defined in the limiting case $\beta=0$ as well. It follows from expression \eqref{e:mgf_Mittag_Leffler} that $M_0(t) \equiv M_0$ can be regarded as an exponential random variable of unit parameter. 

In addition, for later use, let $V_{\beta}$, $0 \leq \beta < 1$ denote a random variable with density
\begin{equation}  \label{e:pdfTinf}
g_{V_{\beta}} (x) = (1-\beta) x^{-\beta}, \ \ \ 0 < x \leq 1\,.
\end{equation}
Here, $V_{\beta}$ is taken to be independent of $( M_{\beta}(t) )$. 

Let $U_{\alpha}(x) = \rho_{\alpha}(x,\infty)$, $x > 0$. Define the right continuous inverse of $U_{\alpha}(x)$ by
$$
U_{\alpha}^{\leftarrow}(y) = \inf\{x>0: U_{\alpha}(x) \leq y  \}\,, \ \ \ y>0\,.
$$
Given the normalizing sequence $(a_n)$ for pointwise dual ergodicity and its wandering rate sequence $(w_n)$, we define
\begin{equation}  \label{e:normalizing_const}
c_n = 2^{2/\alpha} C_{\alpha,\beta} C_{\alpha/2}^{-2/\alpha} a_n \bigl( U_{\alpha}^{\leftarrow} (w_n^{-1}) \bigr)^2\,,
\end{equation}
where 
\begin{equation}  \label{e:def.C_al.be}
C_{\alpha,\beta} = \Gamma(1+\beta) \bigl(E M_{\beta}(1-V_{\beta})^{\alpha/2} \bigr)^{2/\alpha}
\end{equation}
and $C_{\alpha/2}$ is a tail constant of an $\alpha/2$-stable random variable; see \cite{samorodnitsky:taqqu:1994}. By the definition of $(c_n)$, one can directly calculate its regular variation exponent:
\begin{equation}  \label{e:RV_index_cn}
c_n \in RV_{\beta+2(1-\beta)/\alpha}\,.
\end{equation}
Therefore, the growth rate of the sample autocovariance of the process $\BX$ is determined by not only heavy tailedness of the marginals (through $\alpha$) but also the length of memory (through $\beta$). This is in contrast to the case of the processes generated by dissipative flows, e.g., $\alpha$-stable moving averages studied by \cite{resnick:samorodnitsky:xue:1999}. Indeed, it was shown in \cite{resnick:samorodnitsky:xue:1999} that the sample autocovariances of the $\alpha$-stable moving averages grow at a regularly varying rate with exponent $2/\alpha$, regardless of the length of memory. A substitution of $\beta=0$ into (\ref{e:RV_index_cn}) yields $c_n \in RV_{2/\alpha}$, which implies that $\beta=0$ corresponds to the shortest memory in the process $\BX$. As $\beta$ gets closer to $1$, it is expected to exhibit longer memory. 

Finally we need to recall a few useful representations for the process $\BX$. First, we decompose the process $\BX$ by the magnitude of the L\'{e}vy jumps. Let
$$
\rho_{\alpha,1}(\cdot) = \rho_{\alpha}(\cdot \cap \{ x: |x| > 1 \})\,,
$$
$$ 
\rho_{\alpha,2}(\cdot) = \rho_{\alpha}(\cdot \cap \{ x: |x| \leq 1 \})\,,
$$
and let $M_i$, $i=1,2$, denote homogeneous symmetric \id\ random measures with the same control measure $\mu$ and local L\'{e}vy measures $\rho_{\alpha,i}$, $i=1,2$. 
Then, $X_n$ can be written as 
$$
X_n \stackrel{d}{=} \int_E f_n (x) dM_1(x) + \int_E f_n (x) dM_2(x)\,.
$$
Denote $X_n^{(i)} = \int_E f_n(x) dM_i(x)$, $i=1,2$. We may write
\begin{equation}  \label{e:decomp.4parts}
\frac{n}{c_n} \widehat{\gamma}_n(h)  
\stackrel{d}{=} c_n^{-1} \left( \sum_{k=1}^n X_k^{(1)} X_{k+h}^{(1)} + \sum_{k=1}^n X_k^{(1)} X_{k+h}^{(2)} + \sum_{k=1}^n X_k^{(2)} X_{k+h}^{(1)} + \sum_{k=1}^n X_k^{(2)} X_{k+h}^{(2)} \right)\,.
\end{equation}
We also recall a certain series representation of $(X_n)$, which was developed by \cite{rosinski:1990}. The reader may refer to Section 3.10 in \cite{samorodnitsky:taqqu:1994} as well. Since $\mu$ is a $\sigma$-finite measure, one can find a $\mu$-equivalent probability measure $\mu_0$ such that 
$$
\mu_0 (B) = \int_B q(x) \mu(dx)\,,  
$$
where $q$ is a positive measurable function on $E$. For $l=1,2$, we write $U_{\alpha,l}(x) = \rho_{\alpha,l}(x,\infty)$ for $x > 0$ and define the right continuous inverse of $U_{\alpha,l}(x)$ by
$$
U_{\alpha,l}^{\leftarrow} (y) = \inf \{x> 0: U_{\alpha,l}(x) \leq y \}\,, \ \ \ y > 0\,.  
$$
According to \cite{rosinski:1990}, $X_n^{(l)}$ can be represented in law as 
\begin{equation}  \label{e:series.representation}
(X_n^{(l)},\, n \geq 0 ) \stackrel{d}{=} \left( \sum_{i=1}^{\infty} \epsilon_i U_{\alpha, l}^{\leftarrow} \left(  \frac{\Gamma_i q(V_i)}{2} \right) f_n(V_i),\, n \geq 0 \, \right)\,,
\end{equation}
where $( \epsilon_i )$ is an i.i.d. Rademacher sequence taking $1$ or $-1$ with probability $1/2$, $\Gamma_i$ is the $i$th jump time of a unit rate Poisson process, and $( V_i )$ is a sequence of i.i.d. random variables with common distribution $\mu_0$. 

\begin{theorem} \label{t:main_theorem1}
Let $M$ be a symmetric homogeneous infinitely divisible random measure on $(E,\mathcal{E})$ with control measure $\mu$ and local L\'evy measure $\rho_{\alpha}$, which satisfies (\ref{e:RV_Levy}) and (\ref{e:lower_tail_Levy}). \\
Let $T$ be a pointwise dual ergodic map on a $\sigma$-finite infinite measure space $(E,\mathcal{E},\mu)$ with normalizing sequence $(a_n) \in RV_{\beta}$. Suppose that $T$ admits a uniform set $A \in \mathcal{E}$, $0 < \mu(A) < \infty$, and (\ref{e:generalize_Markov_shift}) is fulfilled. \\
Let $f:E \to \bbr$ be a measurable function that is supported by $A$ and satisfies integrability condition (\ref{e:integrability}). \\
If $\alpha$ and $\beta$ lie in the range (\ref{e:first_range}), then the stationary infinitely divisible process $\BX$ given in (\ref{e:def_Xn}) satisfies for $H \geq 0$, 
\begin{equation}  \label{e:main_result}
\left( \frac{n}{c_n} \widehat{\gamma}_n(h), \, h=0,\dots,H \right) \Rightarrow \bigl( \mu(f \cdot f_h) W,\, h=0, \dots, H \bigr) \ \ \ \text{in } \bbr^{H+1}
\end{equation}
as $n \to \infty$. Here, $(c_n)$ is given in \eqref{e:normalizing_const} and $W$ is a positive strictly stable random variable of exponent $\alpha / 2$, i.e., the characteristic function of $W$ is 
\begin{equation}  \label{e:pos_stable_subordinator}
E e^{iuW} = \exp \{ \int_{(0,\infty)} \hspace{-5pt} (e^{iux} - 1) \rho_{*}(dx) \} \ \ \ u \in \bbr\,,
\end{equation}
with $\rho_{*}(dx) = 2^{-1}\alpha C_{\alpha/2} x^{-1-\alpha/2} dx$, $x>0$. \\
As a consequence, we also get
\begin{equation}  \label{e:sub_main_result}
\widehat{\rho}_n(h) \stackrel{p}{\to} \frac{\mu(f \cdot f_h)}{\mu(f^2)} \ \ \ \text{as } n \to \infty\,.
\end{equation}

On the other hand, if $\alpha$ and $\beta$ lie outside of the range (\ref{e:first_range}), we additionally suppose either $(i)$ or $(ii)$ below: \\
(i): $T \times T$ is still a pointwise dual ergodic map on $(E \times E, \mathcal{E} \times \mathcal{E}, \mu \times \mu)$ with normalizing sequence $(a_n^{\prime}) \in RV_{2\beta-1}$, and further, we extend condition (\ref{e:generalize_Markov_shift}) to a two-dimensional version:
\begin{equation}  \label{e:2dim_generalize_Markov_shift}
\frac{1}{(\mu \times \mu)(\varphi(x,y) \leq n)} \sum_{k=1}^n (\widehat{T \times T})^k \one_{(A\times A)^c \cap \{ \varphi(x,y)=k \}} \ \ \ \textrm{is uniformly bounded on } A \times A\,,
\end{equation}
where $\varphi(x,y) = \min \{n \geq 1: (T^nx, T^ny) \in A \times A \}$ is the first entrance time to $A \times A$, and $\widehat{T \times T}$ is a dual operator of $T \times T$. \\
(ii): $A$ is a uniformly returning set for $\one_A$ as specified in \eqref{e:uniformly_returning_set}. Moreover, $f$ is bounded.

Then, (\ref{e:main_result}) and (\ref{e:sub_main_result}) follow again.
\end{theorem}

Before stating the proof of Theorem \ref{t:main_theorem1}, we present three examples of different situations where the theorem applies. The first example is what \cite{resnick:samorodnitsky:xue:2000} studied, but their example can be regarded as a special case of our more general setup. 

\begin{example} \label{ex:null_rec_MC}
Let $( x_k, k \geq 0)$ be an irreducible null recurrent Markov chain with state space $\mathbb{Z}$ and transition matrix $P = ( p_{ij} )$. Let $P_i(\cdot)$ be a probability law of $( x_k )$ starting in state $i \in \mathbb{Z}$. Since $( x_k )$ is null recurrent, there exists a unique (up to constant multiplications), infinite, invariant measure $( \pi_i )$. We set $\pi_0=1$ for normalization. Define a $\sigma$-finite and infinite measure on $(E,\mathcal{E})=(\mathbb{Z}^{\bbn},\mathcal{B(\mathbb{Z}^{\bbn})})$ by 
$$
\mu(\cdot) = \sum_{i \in \mathbb{Z}} \pi_i P_i(\cdot)\,.
$$
Let $T:\mathbb{Z}^\bbn \to \mathbb{Z}^\bbn$ be the left shift map defined by $T(x_0, x_1, \dots) = (x_1, x_2, \dots)$. Obviously, $T$ preserves the measure $\mu$. From \cite{harris:robbins:1953}, it is known that the map $T$ is conservative and ergodic. 

We consider the set $A = \{ x \in \mathbb{Z}^{\mathbb{N}}: x_0 = 0 \}$. From the formula on page $157$ of \cite{aaronson:1997}, we have
$$
\widehat{T}^k \one_A (x) = P_0(x_k=0) \ \ \ \text{for } x \in A\,. 
$$
Thus, with the normalizing sequence $a_n =  \sum_{k=1}^n P_0(x_k=0)$, 
$$
\frac{1}{a_n} \sum_{k=1}^n \widehat{T}^k \one_A(x) = 1 = \mu(A)
$$
holds for every $x \in A$. Here, $A$ is a Darling-Kac set and hence $T$ is a pointwise dual ergodic map.

One of the possible ways for ensuring regular variation of $(a_n)$ is to assume 
$$
\sum_{k=1}^n P_0(\varphi \geq k) \in RV_{1-\beta} \ \ \ \text{for some } 0 \leq \beta < 1\,,
$$
where $\varphi(x) = \min\{n \geq 1: x_n = 0  \}$, $x \in \mathbb{Z}^{\bbn}$, is the first entrance time to the set $A$. \\
From Lemma 3.3 in \cite{resnick:samorodnitsky:xue:2000}, we see that $\mu(\varphi=n) = P_0(\varphi \geq n)$ and by \eqref{e:prop3.8.7},
$$
a_n \sim \frac{1}{\Gamma(2-\beta)\Gamma(1+\beta)} \frac{n}{\mu(\varphi \leq n)} \in RV_{\beta}\,.
$$

We will proceed to check condition \eqref{e:generalize_Markov_shift}. The formula on page $156$ of \cite{aaronson:1997} gives
$$
\widehat{T}^k \one_{A^c \cap \{ \varphi = k \}}\bigl( x_0,x_1,\ldots\bigr)
= \one_{\{x_0=0\}}\sum_{i_0 \not=0}\pi_{i_0}\sum_{i_1\not=0}p_{i_0 i_1}\ldots
\sum_{i_{k-1}\not=0}p_{i_{k-2} i_{k-1}}p_{i_{k-1} 0}\,,
$$
which immediately implies \eqref{e:generalize_Markov_shift}. 

We take a measurable function $f:\bbz^{\bbn} \to \bbr$ that is supported by the set $A$ and satisfies \eqref{e:integrability}. Now, Theorem \ref{t:main_theorem1} applies if the parameters lie in the range $1<\alpha<2$, $0 \leq \beta < 1$, or $0 < \alpha \leq 1$, $0 \leq \beta < 1/(2-\alpha)$. 
 
On the other hand, if $0 < \alpha \leq 1$ and $1/(2-\alpha) \leq \beta < 1$, we need to check the conditions given in $(i)$ of Theorem \ref{t:main_theorem1}. For this, we consider a two-dimensional Markov chain $(( x_k, y_k), \, k \geq0)$ with $(y_k)$ an independent copy of $(x_k)$. Let $P_{(i,j)}(\cdot)$ be a probability law of $(x_k, y_k)$ starting from $(i,j) \in \mathbb{Z} \times \mathbb{Z}$. It is now easy to check that $(x_k, y_k)$ is also irreducible and null recurrent, and a probability measure $\mu \times \mu$ can be written as 
$$
(\mu \times \mu)(\cdot) = \sum_{i \in \mathbb{Z}} \sum_{j \in \mathbb{Z}} \pi_i \pi_j P_{(i,j)}(\cdot)\,.
$$
Because of \cite{harris:robbins:1953} again, we can say that $T \times T$ is conservative ergodic and measure preserving map on $(\mathbb{Z}^\bbn \times \mathbb{Z}^\bbn, \mathcal{B}(\mathbb{Z}^\bbn) \times \mathcal{B}(\mathbb{Z}^\bbn))$. 

Evidently, the product set $A \times A$ is a Darling-Kac set. Indeed,
$$
\sum_{k=1}^n (\widehat{T \times T})^k \one_{A \times A}(x,y) = \sum_{k=1}^n \widehat{T}^k \one_A (x) \widehat{T}^k \one_A (y) 
= \sum_{k=1}^n P_0(x_k=0)^2 \ \ \ \text{for } (x,y) \in A \times A\,.
$$
Therefore, by the normalizing sequence $a_n^{\prime} = \sum_{k=1}^n P_0(x_k=0)^2$, the product set $A \times A$ turns out to be a Darling-Kac set, and $T \times T$ is, of course, pointwise dual ergodic. 

Once again, by appealing to Lemma 3.3 in \cite{resnick:samorodnitsky:xue:2000}, we get 
$$
(\mu \times \mu)(\varphi(x,y) \leq n) = \sum_{k=1}^n P_{(0,0)}(\varphi (x,y) \geq k) \in RV_{2(1-\beta)}.
$$
Thus,
$$
a_n^{\prime} \sim \frac{1}{\Gamma(3-2\beta) \Gamma(2\beta)} \frac{n}{(\mu \times \mu)(\varphi(x,y) \leq n)} \in RV_{2\beta-1}.
$$
To check \eqref{e:2dim_generalize_Markov_shift}, one more application of the formula on p. 156 of \cite{aaronson:1997} yields
$$
(\widehat{T \times T})^k \one_{(A \times A)^c \cap \{ \varphi(x,y) = k \}}\bigl( (x_0,y_0), (x_1,y_1) \ldots\bigr)
$$
$$
= \one_{\{(x_0,y_0)=(0,0)\}} \hspace{-5pt} \sum_{(i_0,j_0) \not=(0,0)} \pi_{i_0}\pi_{j_0} \hspace{-5pt} \sum_{(i_1,j_1)\not=(0,0)}p_{i_0 i_1}p_{j_0 j_1}\ldots
\hspace{-15pt} \sum_{(i_{k-1},j_{k-1})\not=(0,0)}\hspace{-15pt}p_{i_{k-2} i_{k-1}}p_{i_{k-1} 0}p_{j_{k-2} j_{k-1}}p_{j_{k-1} 0}\,.
$$
Therefore \eqref{e:2dim_generalize_Markov_shift} holds, and in this case, Theorem \ref{t:main_theorem1} applies as well. 

It is not difficult to prove that the process $\BX = (X_1,X_2,\dots)$ is mixing. To see this, we only check a sufficient condition proposed by Theorem 5 in \cite{rosinski:zak:1996}:
$$
\mu \{ x:|f(x)| > \epsilon, \, |f \circ T^n(x)| > \epsilon  \} \to 0 \ \ \ \text{as } n \to \infty, \ \ \text{for every } \epsilon >0\,.
$$
Since $f$ vanishes outside of $A$ and $(x_n)$ is null recurrent, we have as $n \to \infty$, 
$$
\mu \{ x:|f(x)| > \epsilon, \, |f \circ T^n(x)| > \epsilon  \} \leq \mu(A \cap T^{-n}A) = P_0(x_n=0) \to 0. 
$$
\end{example}
\vspace{10pt}

The next two examples are less familiar to probabilists, but are well known to ergodic theorists. 

\begin{example} \label{ex:basic_AFN_system}
In this example, we will define the so-called {\it basic AFN-system}. We refer the reader to \cite{zweimuller:2000} and to \cite{thaler:zweimuller:2006}. Let $E$ be the union of a finite family of disjoint bounded open intervals on $\bbr$, and let $\mathcal{E}$ be its Borel $\sigma$-field. Let $\xi$ be a (possibly infinite) collection of nonempty, pairwise disjoint open subintervals in $E$. With $\lambda$ being the one-dimensional Lebesgue measure, we assume $\lambda(E \setminus \bigcup_{Z \in \xi} Z)=0$. 

Let $T:E \to E$ be a twice-differentiable map and strictly monotonic on each $Z \in \xi$. Suppose that $T$ satisfies the following conditions. \\
$(A)$ {\it Adler's condition}:
$$
T^{\prime \prime} / (T^{\prime})^2 \text{ is bounded on } \bigcup_{Z \in \xi} Z\,.
$$
$(F)$ {\it Finite image condition}:
$$
\text{the collection } T \xi = \{ TZ: Z \in \xi  \} \text{ is finite}.
$$
$(N)$ {\it A possibility of nonuniform expansion}: There exists a finite subset $\zeta \subseteq \xi$ such that each $Z \in \zeta$ has {\it an indifferent fixed point} $x_Z$ as one of its endpoints. That is, 
$$
\lim_{x \to x_Z, x \in Z} Tx = x_Z \ \ \ \text{and} \ \ \ \lim_{x \to x_Z, x \in Z} T^{\prime} x = 1\,.
$$
Moreover, we suppose that for each $Z \in \zeta$, 
$$
T^{\prime} \text{ either decreases on } (-\infty, x_Z) \cap Z \text{ or increases on } (x_Z, \infty) \cap Z\,,
$$ 
depending on whether $x_Z$ is the left endpoint or the right endpoint of $Z$. 

Assume that $T$ is uniformly expanding whenever it is bounded away from the family of indifferent fixed points $\{x_Z : Z \in \zeta  \}$, i.e., for each $\epsilon > 0$, there is a $\rho(\epsilon) > 1$ such that 
$$
|T^{\prime}| \geq \rho(\epsilon) \text{ on } E \setminus \bigcup_{Z \in \zeta} \bigl((x_Z - \epsilon, x_Z + \epsilon) \cap Z \bigr)\,.
$$

We will further specify the behavior of $T$ in neighborhood of the indifferent fixed points; for every $Z \in \zeta$, there is $0 \leq \beta_{Z} < 1$ such that 
\begin{equation}  \label{e:nice_expansion}
Tx = x + a_Z |x-x_Z|^{1/\beta_Z + 1} + o(|x-x_Z|^{1/\beta_Z + 1}) \ \ \ \text{as } x \to x_Z \text{ in } Z
\end{equation}
for some $a_Z \neq 0$. 

As argued in \cite{zweimuller:2000}, there is not much loss of generality in assuming that $T$ is conservative and ergodic with respect to $\lambda$. In this case, if additionally $\zeta$ is non-empty, then the triplet $(E,T,\xi)$ is said to be {\it a basic AFN-system}. In the sequel, we will assume this property. 

Given a basic AFN-system $(E,T,\xi)$, there always exists an infinite invariant measure $\mu \ll \lambda$ with density $d\mu / d\lambda(x) = h_0(x)G(x)$, where 
$$
G(x) = \begin{cases} (x-x_Z)\bigl(x-(T|_Z)^{-1}(x)\bigr)^{-1} & \text{if } x \in Z \in \zeta\,, \\
  1  & \text{if } x \in E \setminus \bigcup_{Z \in \zeta}Z\,,
\end{cases}
$$
and $h_0$ is a function of bounded variation, bounded away from both zero and infinity. Now, we can view $T$ as a conservative ergodic and measure preserving map on an infinite measure space $(E,\mathcal{E},\mu)$.

An example of a basic AFN-map is Boole's transformation placed on
$E=(0,1/2)\cup (1/2,1)$, defined by
$$
T(x) = \frac{x(1-x)}{1-x-x^2} , \ x\in (0,1/2), \ \ T(x)=1-T(1-x), \
x\in (1/2,1)\,.
$$
It admits expansions of the form \eqref{e:nice_expansion} at the indifferent fixed points $x_Z=0$ and
$x_Z=1$ with $\beta_Z=1/2$ in both cases. The invariant measure $\mu$
satisfies 
$$
\frac{d\mu}{d\lambda}(x) = \frac{1}{x^2}+ \frac{1}{(1-x)^2}, \ x\in
E\,; 
$$
see page 4-5 in \cite{thaler:2001}.

Given a constant $0 < \epsilon <1$, we take
$$
A = E \setminus \bigcup_{Z \in \zeta} \bigl((x_Z - \epsilon, x_Z + \epsilon) \cap Z \bigr)\,.
$$ 

Since $\lambda(\partial A)=0$ and $A$ is bounded away from $\{ x_Z, Z \in \zeta \}$, $A$ is a Darling-Kac set, and hence $T$ is a pointwise dual ergodic map (Corollary 3 in \cite{zweimuller:2000}). Furthermore, because of the assumption \eqref{e:nice_expansion}, $(a_n)$ turns out to be regularly varying with index $\beta=\min_{Z \in \zeta}\beta_Z$ (Theorem 4 in \cite{zweimuller:2000}). Moreover, the formulas (2.5) and (2.6) in \cite{thaler:zweimuller:2006} prove the condition \eqref{e:generalize_Markov_shift}. 

Suppose that the parameters $\alpha$ and $\beta$ lie in the range of either $1<\alpha<2$, $0\leq \beta < 1$, or $0 < \alpha \leq 1$, $0 \leq \beta < 1/(2-\alpha)$. If a measurable function $f:E \to \bbr$ is supported by the set $A$ together with a proper integrability assumption, then Theorem \ref{t:main_theorem1} applies. 

Suppose that $0 < \alpha \leq 1$ and $1/(2-\alpha) \leq \beta < 1$. In this case, we will check $(ii)$ in Theorem \ref{t:main_theorem1}, because unlike Example \ref{ex:null_rec_MC}, the product map $T \times T$ is not generally conservative and ergodic.  According to condition $(ii)$, however, the Darling-Kac set $A$ must be a uniformly returning set. Unfortunately, this is not always the case for a general basic AFN-system. 
To overcome this difficulty, we have to impose certain additional assumptions; see for example, \cite{thaler:2000}. If we restrict ourselves to such a type of a basic AFN-system, then $(ii)$ is satisfied and consequently Theorem \ref{t:main_theorem1} follows.

Finally, it is worth pointing out that the process $\BX=(X_1,X_2,\dots)$ is mixing. This can be proved as in Example \ref{ex:null_rec_MC}. 
\end{example}

\begin{example} \label{ex:S_unimodal}
We will construct the dynamical system by a $S$-unimodal map with flat critical point. The main reference here is \cite{zweimuller:2004}. Let $T:[a,b] \to [a,b]$ be a $S$-unimodal map with flat critical point $c \in (a,b)$. That is, the Schwarzian derivative of $T$ is nonpositive: $\mathcal{S}T = T^{\prime \prime \prime} / T^{\prime} - \frac{3}{2} (T^{\prime \prime} / T^{\prime})^2 \leq 0$, and all derivatives at the critical point $c$ vanish: $T^{(n)}c = 0$ for all $n \geq 1$. Further assume that $Ta = Tb = a$ and that $\int_{[a,b]} \ln |T^{\prime}| d\lambda = -\infty$ ($\lambda$ is the one-dimensional Lebesgue measure). In addition, we suppose that $T$ satisfies Misiurewicz condition, i.e., there is an open interval $I$ containing $c$ such that $T^n c \notin I$ for all $n \geq 1$. Also, assume that there exists a positive and finite Lyapunov exponent $\lambda_c = \lim_{n \to \infty} n^{-1} \ln |(T^n)^{\prime}(Tc)|$.

The dynamical effect of a flat critical point is that the closer the orbit gets to $c$, the slower it moves away from the critical orbit $( T^n c, n \geq 1 )$. Consequently, the orbit stays in neighborhood of $( T^n c, n \geq 1 )$ for a nonnegligible amount of time. 

It is shown in \cite{zweimuller:2004} that there exists an infinite measure $\mu \ll \lambda$ such that $T$ is a conservative ergodic and measure preserving map on $([a,b],\mathcal{B}([a,b]),\mu)$. 

From \cite{zweimuller:2004} and \cite{zweimuller:2007}, one can find a Darling-Kac set $A$, which is bounded away from the critical orbit $( T^n c, n \geq 1 )$ such that 
$$
\left( \frac{\widehat{T}^n \one_{A \cap \{ \varphi = n \}}}{\mu(A \cap \{ \varphi = n \})}, n \geq 1 \right) \ \ \ \text{is bounded on } A. 
$$
This property in fact proves condition \eqref{e:generalize_Markov_shift}. The existence of a positive and finite Lyapunov exponent guarantees that the normalizing sequence $(a_n)$ for the Darling-Kac set is a regularly varying function of the order $0 < \beta < 1$ (Theorem 7 in \cite{zweimuller:2004}). 

Suppose that the range of the parameters $\alpha$ and $\beta$ is either $1<\alpha<2$, $0 < \beta < 1$ or $0 < \alpha \leq 1$, $0 < \beta < 1/(2-\alpha)$. If a measurable function $f$ satisfies a proper integrability condition and is supported by the set $A$, Theorem \ref{t:main_theorem1} applies. 
\end{example}

\section{Important Ingredients} \label{sec:lemmas}

For the completion of the proof of Theorem \ref{t:main_theorem1}, we need several important ingredients, all of which are collected in this section. The first result is the most important and is known as ''Generalized Darling-Kac theorem'', which describes ergodic convergence of partial sums when the trajectory $(T^n x)$ is depicted by a pointwise dual ergodic map. This is of interest on its own in infinite ergodic theory; see \cite{aaronson:1981}, \cite{thaler:zweimuller:2006}, and \cite{owada:samorodnitsky:2012}. The first part of the proof of Theorem \ref{t:main_theorem1} is closely related to the argument in \cite{thaler:zweimuller:2006}. In particular, as seen in \cite{thaler:zweimuller:2006}, the idea of constant applications of Karamata's Tauberian theorem for power series (e.g., Corollary 1.7.3 in \cite{bingham:goldie:teugels:1987} or Proposition 4.2 in \cite{thaler:zweimuller:2006}) is crucial. The useful techniques to handle the power series below are collected in Section 5 of \cite{thaler:zweimuller:2006}. 

\begin{lemma} (Generalized Darling-Kac theorem) \label{l:DK_theorem}
Under the assumptions of Theorem \ref{t:main_theorem1}, let $\phi(x) = f(x) \sum_{h=0}^H \theta_h f_h(x)$, $\theta_0, \dots, \theta_H \in \bbr$. Then, we have as $n \to \infty$,
$$
\frac{S_n(\phi)}{a_n} \Rightarrow \mu(\phi) \Gamma(1+\beta) M_{\beta}(1-V_{\beta}) \ \ \ \text{in } \mathbb{R}
$$
with respect to $\mu_n(\cdot) = \mu(\cdot \cap \{ \varphi \leq n \}) / \mu(\varphi \leq n)$. Here, $(M_{\beta}(t), \, t \geq 0)$ and $V_{\beta}$ are defined in \eqref{e:MLprocess} and \eqref{e:pdfTinf}, respectively. 
\end{lemma}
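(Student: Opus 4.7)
The plan is to exploit that $\phi$ is supported on $A$, condition on the first entrance time $\varphi$, and reduce the problem to the classical Darling--Kac theorem combined with the regular variation of $(a_n)$. Since $\phi = f\cdot\sum_{h=0}^H\theta_h f_h$ and $f$ vanishes off $A$, $\phi$ is itself supported on $A$. Consequently, for $x$ with $\varphi(x)\le n$, writing $y=T^{\varphi(x)}x\in A$,
\[
S_n(\phi)(x)=\sum_{\ell=0}^{n-\varphi(x)}\phi\circ T^\ell(y)=S_{n-\varphi(x)}(\phi)(y)+\phi(y),
\]
with boundary term $\phi(y)=O(1)$ negligible after dividing by $a_n\nearrow\infty$.

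Under $\mu_n$, the wandering rate asymptotics $w_n\sim\mu(\varphi\le n)\in RV_{1-\beta}$ give
\[
\mu_n(\varphi\le sn)=\frac{\mu(\varphi\le sn)}{\mu(\varphi\le n)}\longrightarrow s^{1-\beta},\quad s\in(0,1],
\]
which is exactly the cdf of $V_\beta$ from \eqref{e:pdfTinf}; hence $\varphi/n\Rightarrow V_\beta$ under $\mu_n$. For a fixed proportion $j/n\to s\in(0,1]$, the Darling--Kac theorem (Theorem 3.6.4 of \cite{aaronson:1997}) applied under $\mu|_A/\mu(A)$ gives $S_m(\phi)/a_m\Rightarrow\mu(\phi)\Gamma(1+\beta)M_\beta(1)$, where the $\Gamma(1+\beta)$ normalizes $M_\beta(1)$ to mean one. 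Using $a_{n-j}/a_n\to(1-s)^\beta$ (from $a_n\in RV_\beta$) and the self-similarity $M_\beta(1-s)\eid(1-s)^\beta M_\beta(1)$ of the Mittag-Leffler process,
\[
\frac{S_{n-j}(\phi)(y)}{a_n}=\frac{S_{n-j}(\phi)(y)}{a_{n-j}}\cdot\frac{a_{n-j}}{a_n}\Rightarrow\mu(\phi)\Gamma(1+\beta)M_\beta(1-s).
\]
Mixing over $s$ distributed as $V_\beta$, independent of the post-entrance Mittag--Leffler limit, produces $\mu(\phi)\Gamma(1+\beta)M_\beta(1-V_\beta)$.

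To make this rigorous, test against bounded continuous $F$: decompose on the level sets $\{\varphi=j\}$ and apply the dual relation \eqref{e:dual.rel} to obtain
\[
\int_E F(S_n(\phi)/a_n)\one_{\{\varphi=j\}}\,d\mu=\int_E F(S_{n-j}(\phi)(y)/a_n)\,\widehat{T}^j\one_{\{\varphi=j\}}(y)\,d\mu(y)+o(1),
\]
then split $\widehat{T}^j\one_{\{\varphi=j\}}=\widehat{T}^j\one_{A\cap\{\varphi=j\}}+\widehat{T}^j\one_{A^c\cap\{\varphi=j\}}$. The first piece acts as a return-time density on $A$ whose aggregate $j$-mass reproduces the DK normalization; the second piece is uniformly controlled on $A$ after dividing by $\mu(\varphi\le n)$ thanks to \eqref{e:generalize_Markov_shift}. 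The main obstacle is establishing the joint convergence $(\varphi/n,\,S_n(\phi)/a_n)\Rightarrow(V_\beta,\,\mu(\phi)\Gamma(1+\beta)M_\beta(1-V_\beta))$ with the required independence of $V_\beta$ and the Mittag--Leffler factor: this is precisely where \eqref{e:generalize_Markov_shift} is essential, ensuring that the spatial distribution on $A$ induced by the first-entrance mechanism equilibrates to $\mu|_A/\mu(A)$ uniformly in the entrance-time proportion $j/n$, so the DK limit decouples from the entrance-time asymptotics and the factorized joint limit emerges. The rest is dominated convergence, justified by the uniform bound from \eqref{e:generalize_Markov_shift} together with the relation $a_n\sim n/(\Gamma(2-\beta)\Gamma(1+\beta)\,w_n)$ from \eqref{e:prop3.8.7}.
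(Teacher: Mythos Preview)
Your route is genuinely different from the paper's. The paper first proves the result for $\phi=\one_A$ by the method of moments: it computes $\int_E\bigl(S_n(\one_A)/a_n\bigr)^r d\mu_n$ via the combinatorial identity $\binom{S_n(\one_A)}{r}=\sum_{k=1}^n\bigl(\one_A\binom{S_{n-k}(\one_A)}{r-1}\bigr)\circ T^k$ and repeated Karamata--Tauberian arguments, relying on Thaler--Zweim\"uller's known asymptotics for $\int_A S_n(\one_A)^{r-1}d\mu_A$. Only afterwards is $\one_A$ replaced by $\phi$, and this is done via Hopf's ratio ergodic theorem, which gives $S_n(\phi)/S_n(\one_A)\to\mu(\phi)/\mu(A)$ a.e.\ on $A$; condition \eqref{e:generalize_Markov_shift} enters solely to justify dominated convergence in this replacement step.

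Your conditioning-on-first-entrance strategy is the natural probabilistic alternative, and the heuristics (weak limit $\varphi/n\Rightarrow V_\beta$ under $\mu_n$, self-similarity $M_\beta(1-s)\eid(1-s)^\beta M_\beta(1)$, regular variation $a_{n-j}/a_n\to(1-s)^\beta$) are all correct. However, there is a real gap at exactly the place you flag as ``the main obstacle.'' You assert that \eqref{e:generalize_Markov_shift} ensures the entrance distribution on $A$ equilibrates to $\mu_A$ uniformly in $j/n$, so that the classical Darling--Kac limit decouples from $V_\beta$. But \eqref{e:generalize_Markov_shift} is only a uniform \emph{boundedness} statement for $\mu(\varphi\le n)^{-1}\sum_{k\le n}\widehat T^k\one_{A^c\cap\{\varphi=k\}}$ on $A$; it says nothing about convergence of the entrance densities $\widehat T^j\one_{\{\varphi=j\}}$ to a multiple of $\one_A$, and in general no such equilibration holds. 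Boundedness alone lets you dominate, but it cannot identify the limit. The classical Darling--Kac theorem you invoke requires a \emph{fixed} initial law $P\ll\mu$; here both the initial law (the entrance density) and the time horizon $n-j$ vary with $n$, and you have supplied no mechanism to pass to the limit jointly.

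To close your argument you would need either an a.e.\ ratio statement on $A$ (this is exactly Hopf's theorem, which the paper invokes) or a uniform-in-$P$ version of Darling--Kac over the family of entrance laws, neither of which you establish. As written, the proposal is an outline whose decisive step is stated rather than proved.
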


\begin{proof}
We first claim that as $n \to \infty$,
\begin{equation}  \label{e:firstgoalergodic}
\frac{S_n(\one_A)}{a_n} \Rightarrow \mu(A) \Gamma(1+\beta) M_{\beta} (1-V_{\beta}) \ \ \ \text{in } \mathbb{R} 
\end{equation}
with respect to $\mu_n$, and we will try to replace $\one_A$ by a more general function $\phi$ thereafter. 
Because of \eqref{e:mgf_Mittag_Leffler} and the fact that $M_{\beta}(t)$ is a self-similar process with self-similarity exponent $\beta$, the moments of $M_{\beta}(1-V_{\beta})$ are given by
$$
E M_{\beta}(1-V_{\beta})^r = E (1-V_{\beta})^{r \beta} E M_{\beta}(1)^r = r! \frac{\Gamma(2-\beta)}{\Gamma(r \beta + 2 - \beta)}\,.
$$
Recall the fact that given the moments of all orders, the Mittag-Leffler laws can be uniquely determined (e.g., \cite{bingham:1971}). A simple application of the Carleman sufficient condition proves that the laws of $M_{\beta}(1-V_{\beta})$ can also be uniquely determined by their moments. From these observations, (\ref{e:firstgoalergodic}) follows if we can show that as $n \to \infty$,
$$
\int_E \left( \frac{S_n(\one_A)}{a_n} \right)^r d\mu_n \to \bigl( \mu(A) \Gamma(1+\beta) \bigr)^r r! \frac{\Gamma(2-\beta) }{\Gamma(r \beta + 2 - \beta)}\,, \ \ \ \text{for every } r=1,2,\dots
$$

First, we claim that 
\begin{equation}  \label{e:intermediate_KTT}
\sum_{n=1}^{\infty} \left( \int_E \begin{pmatrix} S_n(\one_A) \\ r \end{pmatrix} d\mu \right) e^{-\lambda n} \sim \frac{1}{(r-1)!} \frac{\mu(A)}{\lambda} \sum_{n=1}^{\infty} \left( \int_A S_n(\one_A)^{r-1} d\mu_A \right) e^{-\lambda n} \ \ \ \text{as } \lambda \downarrow 0\,, 
\end{equation}
where $\mu_A(\cdot) = \mu(\cdot \cap A) / \mu(A)$. \\
For the proof, the following identity is needed:
$$
\begin{pmatrix} S_n(\one_A) \\ r \end{pmatrix} = \sum_{k=1}^n \left( \one_A \begin{pmatrix} S_{n-k}(\one_A) \\ r-1 \end{pmatrix} \right) \circ T^k, \ \ \ r=1,2,\dots.
$$
As $\lambda \downarrow 0$, we have
$$
\sum_{n=1}^{\infty} \left( \int_E \begin{pmatrix} S_n(\one_A) \\ r \end{pmatrix} d\mu \right) e^{-\lambda n} = \sum_{n=1}^{\infty} \sum_{k=1}^n \left( \int_E \left( \one_A \begin{pmatrix} S_{n-k}(\one_A) \\ r-1 \end{pmatrix} \right) \circ T^k d\mu \right) e^{-\lambda n} 
$$
$$
\sim \frac{\mu(A)}{\lambda} \sum_{n=1}^{\infty} \left( \int_A \begin{pmatrix} S_n(\one_A) \\ r-1 \end{pmatrix} d\mu_A \right) e^{-\lambda n}.
$$
It is elementary to show that
$$
\int_A \begin{pmatrix} S_n(\one_A) \\ r-1 \end{pmatrix} d\mu_A \sim \frac{1}{(r-1)!} \int_A S_n(\one_A)^{r-1} d\mu_A \ \ \ \text{as } n \to \infty\,,
$$
which completes \eqref{e:intermediate_KTT}. 

We already know from the proof of Theorem 9.1 in \cite{thaler:zweimuller:2006} (or \cite{aaronson:1981}) that
$$ 
\int_A S_n(\one_A)^{r-1} d\mu_A \sim \bigl( \mu(A) \Gamma(1+\beta) \bigr)^{r-1} E M_{\beta}(1)^{r-1} a_n^{r-1} 
$$
$$
= \bigl( \mu(A) \Gamma(1+\beta) \bigr)^{r-1} (r-1)! \frac{a_n^{r-1}}{\Gamma((r-1)\beta + 1)}   \ \ \ \text{as } n \to \infty\,.
$$
Since $(a_n)$ is regularly varying with exponent $\beta$, one can set $a_n = n^{\beta} L(n)$ by some slowly varying function $L$. Then, from Karamata's Tauberian theorem, 
\begin{equation}  \label{e:Theorem9.1}
\sum_{n=1}^{\infty} \left( \int_A S_n(\one_A)^{r-1} d\mu_A \right) e^{-\lambda n} \sim (r-1)! (\mu(A) \Gamma(1+\beta))^{r-1} \frac{1}{\lambda^{(r-1)\beta + 1}} L(\lambda^{-1})^{r-1} \ \ \ \text{as } \lambda \downarrow 0\,. 
\end{equation}
Consequently, from \eqref{e:intermediate_KTT} and \eqref{e:Theorem9.1}, 
$$
\sum_{n=1}^{\infty} \left( \int_E \begin{pmatrix} S_n(\one_A) \\ r \end{pmatrix} d\mu \right) e^{-\lambda n} \sim \mu(A)^r \Gamma(1+\beta)^{r-1} \frac{1}{\lambda^{r\beta+2-\beta}} L(\lambda^{-1})^{r-1} \ \ \ \text{as } \lambda \downarrow 0\,. 
$$
Since $\int_E \begin{pmatrix} S_n(\one_A) \\ r \end{pmatrix} d\mu$ is nondecreasing in $n$ and $r\beta+2-\beta>0$, one more application of Karamata's Tauberian theorem yields
$$
\int_E \begin{pmatrix} S_n(\one_A) \\ r \end{pmatrix} d\mu \sim \frac{\mu(A)^r \Gamma(1+\beta)^{r-1}}{\Gamma(r \beta + 2 - \beta)} n a_n^{r-1} \ \ \ \text{as } n \to \infty\,.
$$
It is not difficult to justify 
$$
\int_E \begin{pmatrix} S_n(\one_A) \\ r \end{pmatrix} d\mu \sim \frac{1}{r!} \int_E S_n(\one_A)^r d\mu \ \ \ \text{as } n \to \infty\,.
$$
Therefore, we get
$$
\int_E \left( \frac{S_n(\one_A)}{a_n} \right)^r d\mu \sim \mu(A)^r r! \frac{\Gamma(1+\beta)^{r-1}}{\Gamma(r\beta+2-\beta)} \frac{n}{a_n} \ \ \ \text{as } n \to \infty\,. 
$$
Thus we get, from \eqref{e:equivalent_wandering} and \eqref{e:prop3.8.7}, 
$$
\int_E \left( \frac{S_n(\one_A)}{a_n} \right)^r d\mu_n = \frac{1}{\mu(\varphi \leq n)} \int_E \left( \frac{S_n(\one_A)}{a_n} \right)^r d\mu 
$$
$$
\to \mu(A)^r r! \frac{\Gamma(2-\beta) \Gamma(1+\beta)^r}{\Gamma(r\beta+2-\beta)} \ \ \ \text{as } n \to \infty\,,
$$
which completes \eqref{e:firstgoalergodic}. 

Next, the indicator function $\one_A$ must be replaced by $\phi$. To this end, it suffices to show that
\begin{equation}  \label{e:goal_replacement}
\mu_n \left( \left| \frac{S_n(\phi)}{S_n(\one_A)} - \frac{\mu(\phi)}{\mu(A)} \right| > \epsilon \right) \to 0 \ \ \ \text{as } n \to \infty, \ \text{for every } \epsilon > 0\,.
\end{equation}
Indeed, if \eqref{e:goal_replacement} is true, the Slutsky theorem gives
$$
\left( \frac{S_n(\one_A)}{a_n}, \frac{S_n(\phi)}{S_n(\one_A)} \right) \Rightarrow \left( \mu(A) \Gamma(1+\beta) M_{\beta}(1-V_{\beta}), \frac{\mu(\phi)}{\mu(A)} \right)
$$
with respect to $\mu_n$. Applying the continuous mapping theorem, we get as $n \to \infty$,
$$
\frac{S_n(\phi)}{a_n} \Rightarrow \mu(\phi) \Gamma(1+\beta) M_{\beta}(1-V_{\beta}) \ \ \ \text{in } \mathbb{R}. \label{eq:extendtophi}
$$ 
Since $\mu(A)<\infty$, it is now enough to verify 
$$
\mu_n \left(A^c \cap \left\{ \left| \frac{S_n(\phi)}{S_n(\one_A)} - \frac{\mu(\phi)}{\mu(A)} \right| > \epsilon \right\} \right) \to 0 \ \ \ \text{as } n \to \infty, \ \text{for every } \epsilon > 0\,.
$$
Denote 
$$
K_n = \left\{ \left| \frac{\phi + S_n(\phi)}{1 + S_n(\one_A)} - \frac{\mu(\phi)}{\mu(A)} \right| > \epsilon  \right\}\,.
$$
Noting that $\phi$ is supported by $A$, we obtain 
$$
\mu \left( A^c \cap \{ \varphi \leq n \} \cap \left\{ \left| \frac{S_n(\phi)}{S_n(\one_A)} - \frac{\mu(\phi)}{\mu(A)} \right| > \epsilon \right\} \right) = \sum_{m=1}^n \mu \bigl(A^c \cap \{ \varphi = m \} \cap T^{-m} K_{n-m} \} \bigr)\,.
$$
Thus, for an arbitrary constant $\delta \in (0,1)$, one can proceed as follows.
$$
\mu_n \left(A^c \cap \left\{ \left| \frac{S_n(\phi)}{S_n(\one_A)} - \frac{\mu(\phi)}{\mu(A)} \right| > \epsilon \right\} \right)
$$
$$
\leq \frac{1}{\mu(\varphi \leq n)} \sum_{m=1}^{\lceil (1-\delta)n \rceil} \mu\bigl(A^c \cap \{ \varphi = m \} \cap T^{-m} K_{n-m}\bigr) + \frac{1}{\mu(\varphi \leq n)}  \sum_{m= \lceil (1-\delta) n \rceil + 1}^n \mu(\varphi = m) 
$$
$$
=\int_A \frac{1}{\mu(\varphi \leq n)} \sum_{m=1}^{\lceil (1-\delta)n \rceil} \widehat{T}^m \one_{A^c \cap \{ \varphi = m \}} \cdot \one_{K_{n-m}} d\mu + 1 - \frac{\mu(\varphi \leq \lceil (1-\delta)n \rceil)}{\mu(\varphi \leq n)} 
$$
$$
\leq \int_A \frac{1}{\mu(\varphi \leq \lceil (1-\delta) n \rceil)} \sum_{m=1}^{\lceil (1-\delta)n \rceil} \widehat{T}^m \one_{A^c \cap \{ \varphi = m \}} \hspace{-10pt} \sup_{n- \lceil (1-\delta)n \rceil \leq i \leq n} \hspace{-10pt} \one_{K_i} d\mu + 1 - \frac{\mu(\varphi \leq \lceil (1-\delta)n \rceil)}{\mu(\varphi \leq n)}\,.
$$
Because of \eqref{e:generalize_Markov_shift}, $\mu(\varphi \leq \lceil (1-\delta) n \rceil)^{-1} \sum_{m=1}^{\lceil (1-\delta)n \rceil} \widehat{T}^m \one_{A^c \cap \{ \varphi = m \}}$ is uniformly bounded on $A$; further, the Hopf's ergodic theorem (sometimes also called a ratio ergodic theorem; see Theorem 2.2.5 in \cite{aaronson:1997}) yields  
$$
\sup_{n - \lceil (1-\delta)n \rceil \leq i \leq n} \hspace{-20pt} \one_{K_i}  \to 0 \ \ \ \text{as } n \to \infty \ \ \ \text{a.e. on } A\,. 
$$
Applying the dominated convergence theorem, we conclude
$$
\limsup_{n \to \infty} \mu_n \left(A^c \cap \left\{ \left| \frac{S_n(\phi)}{S_n(\one_A)} - \frac{\mu(\phi)}{\mu(A)} \right| > \epsilon \right\} \right) \leq 1 - (1-\delta)^{1-\beta}\,.
$$
Letting $\delta \downarrow 0$ on the right hand side, we get \eqref{e:goal_replacement}. 
\end{proof}

Another important ingredient that plays a crucial role in the proof of Theorem \ref{t:main_theorem1} is  
\begin{equation}  \label{e:imp.ingredient}
E \left| \frac{1}{c_n} \sum_{k=1}^n f_k(V_i) f_{k+h}(V_j) q(V_i)^{-1/\alpha^{\prime}} q(V_j)^{-1/\alpha^{\prime}} \right|^{\alpha^{\prime}} \to 0, \ \ \ \text{as } n \to \infty, \ \textrm{for } i \neq j\,.
\end{equation}
where the random variables $(V_i)$ are defined in \eqref{e:series.representation}, and $\alpha^{\prime} = \alpha - \xi$ by some positive constant $\xi>0$. This result will be proved in Lemmas \ref{l:L_alpha_conv_1} - \ref{l:L_alpha_conv_3} below. The constant $\xi>0$ varies in accordance with the values of $\alpha$ and $\beta$. Lemma \ref{l:L_alpha_conv_1} treats the case when $(\alpha,\beta)$ lies in the range (\ref{e:first_range}), while Lemmas \ref{l:L_alpha_conv_2} and \ref{l:L_alpha_conv_3} apply when $(\alpha,\beta)$ lies outside of the range (\ref{e:first_range}). 

\begin{lemma} \label{l:L_alpha_conv_1}
Let $\alpha$ and $\beta$ be in the range of \eqref{e:first_range}. Fix a constant $\xi > 0$ such that
\begin{align*}
\xi &< \alpha - 1 \ \ \ \text{if } 1< \alpha < 2\,,  \\ 
\xi &< \alpha \left( 1 - \frac{1}{2-\beta(2-\alpha)} \right) \ \ \ \text{if } 0 < \alpha \leq 1, \; 0 \leq \beta < \frac{1}{2-\alpha}\,.
\end{align*}
Let $\alpha^{\prime} = \alpha - \xi$. Then, under the conditions of Theorem \ref{t:main_theorem1}, \eqref{e:imp.ingredient} holds.
\end{lemma}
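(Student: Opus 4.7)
My plan is to reduce the claim to a deterministic double integral, and then bound this integral in two different ways depending on whether $\alpha'>1$ or $\alpha'<1$. First, since $V_i,V_j$ are i.i.d.\ with density $q$ with respect to $\mu$, the factors $q(V_i)^{-1/\alpha'}q(V_j)^{-1/\alpha'}$ raised to the $\alpha'$-th power exactly cancel the density after integration, so that
$$
E\bigl|W_{ij}^{(n,\alpha')}\bigr|^{\alpha'} = c_n^{-\alpha'}\int_E\!\int_E\Bigl|\sum_{k=1}^n f_k(x)f_{k+h}(y)\Bigr|^{\alpha'}\mu(dx)\,\mu(dy),
$$
and it suffices to show that the right-hand side is regularly varying with strictly negative index.

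I would then bound the integrand case by case. In the sub-range $1<\alpha<2$ the constraint $\xi<\alpha-1$ forces $\alpha'>1$, and the power-mean inequality yields $\bigl|\sum_{k=1}^n a_k\bigr|^{\alpha'}\le n^{\alpha'-1}\sum_{k=1}^n|a_k|^{\alpha'}$; in the sub-range $0<\alpha\le 1$ one instead has $\alpha'<1$ and the elementary subadditivity $\bigl|\sum_k a_k\bigr|^{\alpha'}\le \sum_k|a_k|^{\alpha'}$ applies. Either inequality decouples the $x$ and $y$ variables, and after applying $T$-invariance to obtain $\int|f_k|^{\alpha'}d\mu=\int|f|^{\alpha'}d\mu$ (finite by H\"older, since $f\in L^2(\mu)$ and the support of $f$ has finite $\mu$-measure) I arrive at
$$
\int_E\!\int_E\Bigl|\sum_{k=1}^n f_k(x)f_{k+h}(y)\Bigr|^{\alpha'}\mu(dx)\,\mu(dy)\;\le\; C\cdot\begin{cases} n^{\alpha'}, & 1<\alpha<2,\\ n, & 0<\alpha\le 1,\end{cases}
$$
with $C=\bigl(\int|f|^{\alpha'}d\mu\bigr)^{2}$.

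Combining with $c_n^{\alpha'}\in RV_{\alpha'(\beta+2(1-\beta)/\alpha)}$ from \eqref{e:RV_index_cn}, the resulting ratio has regular variation index $\alpha'(1-\beta)(1-2/\alpha)<0$ in the first sub-range (using $\alpha<2$ and $\beta<1$), and $1-\alpha'(\beta+2(1-\beta)/\alpha)$ in the second; a short algebraic rearrangement shows the latter is strictly negative exactly when $\alpha'(2-\beta(2-\alpha))>\alpha$, i.e.\ when $\xi<\alpha\bigl(1-1/(2-\beta(2-\alpha))\bigr)$, which is precisely the hypothesis imposed on $\xi$. The delicate point is the choice of integrand bound: the natural Cauchy--Schwarz estimate $\bigl|\sum_k f_k(x)f_{k+h}(y)\bigr|^{\alpha'}\le S_n(f^2)(x)^{\alpha'/2}S_n(f^2)(y)^{\alpha'/2}$, combined with the argument of Proposition~\ref{p:growth_rate_cn} adapted to exponent $\alpha'/2$, only yields a quantity of order $a_n^{\alpha'}w_n^{2}$, and dividing by $c_n^{\alpha'}$ leaves the \emph{positive} index $2(1-\beta)(1-\alpha'/\alpha)$. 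The crude-looking Jensen and subadditivity bounds above win precisely because they avoid inflating by the wandering rate $w_n$, and for the chosen $\xi$ they are just sharp enough to be dominated by $c_n^{\alpha'}$.
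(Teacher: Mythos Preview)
Your proof is correct and essentially identical to the paper's: after reducing to the double $\mu$-integral, the paper applies Minkowski's inequality when $\alpha'>1$ and the triangle inequality when $\alpha'\le 1$, yielding precisely the bounds $(n/c_n)^{\alpha'}(\int|f|^{\alpha'}d\mu)^2$ and $(n/c_n^{\alpha'})(\int|f|^{\alpha'}d\mu)^2$ respectively, which are the same bounds you obtain via the power-mean and subadditivity inequalities. Your additional remarks---justifying $\int|f|^{\alpha'}d\mu<\infty$ via H\"older and explaining why a Cauchy--Schwarz approach would fail---are correct and helpful, though not present in the paper.
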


\begin{proof} 
First, suppose that $1 < \alpha < 2$. Since $\alpha^{\prime} > 1$, Minkowski's inequality applies to obtain
\begin{align*}
E \left| \frac{1}{c_n} \sum_{k=1}^n f_k(V_i) f_{k+h}(V_j) q(V_i)^{-1/\alpha^{\prime}} q(V_j)^{-1/\alpha^{\prime}} \right|^{\alpha^{\prime}} 
&= \frac{1}{c_n^{\alpha^{\prime}}} \int_{E \times E} \bigl|\sum_{k=1}^n f_k(x) f_k(y) \bigr|^{\alpha^{\prime}} (\mu \times \mu) (dx \; dy) \\
&\leq \left( \frac{n}{c_n} \right)^{\alpha^{\prime}} \left( \int_E |f|^{\alpha^{\prime}} d\mu \right)^2\,.
\end{align*}
Since $n/c_n \in RV_{(1-\beta)(1-2/\alpha)}$ with $(1-\beta)(1-2/\alpha) < 0$, we have $n/c_n \to 0$.  

Next, suppose that $0 < \alpha \leq 1$ and $0 \leq \beta < 1/(2-\alpha)$. In this case, a simple application of the triangle inequality gives 
\begin{align*}
E \left| \frac{1}{c_n} \sum_{k=1}^n f_k(V_i) f_{k+h}(V_j) q(V_i)^{-1/\alpha^{\prime}} q(V_j)^{-1/\alpha^{\prime}} \right|^{\alpha^{\prime}} 
&= \frac{1}{c_n^{\alpha^{\prime}}} \int_{E \times E} \bigl|\sum_{k=1}^n f_k(x) f_k(y) \bigr|^{\alpha^{\prime}} (\mu \times \mu) (dx \; dy) \\
&\leq  \frac{n}{c_n^{\alpha^{\prime}}} \left( \int_E |f|^{\alpha^{\prime}} d\mu \right)^2\,.
\end{align*}
Since $n / c_n^{\alpha^{\prime}} \in RV_{1-\alpha^{\prime} (\beta + 2(1-\beta) / \alpha)}$ with $1-\alpha^{\prime} (\beta + 2(1-\beta) / \alpha) < 0$, we have $n/c_n^{\alpha^{\prime}} \to 0$. 
\end{proof}

\begin{lemma} \label{l:L_alpha_conv_2}
Under the setup of Theorem \ref{t:main_theorem1}, particularly let $0 < \alpha \leq 1$, $1/(2-\alpha) \leq \beta < 1$ and assume condition $(i)$. Fix $0 < \xi < \alpha^2 / (\alpha+2)$ and let $\alpha^{\prime} = \alpha - \xi$. Then, (\ref{e:imp.ingredient}) follows. 
\end{lemma}

\begin{proof}
Denote by $S_n(f \times f)(x,y) = \sum_{k=1}^n f_k(x) f_k(y)$ a partial sum defined on a product space $E \times E$. 
By virtue of \eqref{e:2dim_generalize_Markov_shift}, proceeding as in the proof of Lemma \ref{l:DK_theorem}, we can get as $n \to \infty$,
$$
\frac{S_n(f \times f)(x,y)}{a_n^{\prime}} \Rightarrow \mu(f)^2 \Gamma(2\beta) M_{2\beta-1}(1-V_{2\beta-1}) \ \ \ \text{in } \mathbb{R}\,,
$$
where the weak convergence takes place under a probability measure 
\begin{equation}  \label{e:needconservativity}
(\mu \times \mu)_n (\cdot) = (\mu \times \mu)(\cdot \cap \{ \varphi(x,y) \leq n \}) / (\mu \times \mu) (\varphi(x,y) \leq n)\,. 
\end{equation} 
Here, $M_{2\beta-1}(t)$ is the Mittag-Leffler process with exponent $2\beta-1$, and $V_{2\beta-1}$ is defined by \eqref{e:pdfTinf}. The reader may, once again, refer to \cite{thaler:zweimuller:2006}.\\
From \eqref{e:equivalent_wandering} and \eqref{e:prop3.8.7}, and the assumption that $T \times T$ is a conservative and ergodic map, we can obtain
$$
(\mu \times \mu)(\varphi(x,y) \leq n) \sim \frac{1}{\Gamma(3-2\beta) \Gamma(2\beta)} \frac{n}{a_n^{\prime}} \ \ \ \text{as } n \to \infty\,,
$$
from which $(\mu \times \mu)(\varphi(x,y) \leq n) \in RV_{2(1-\beta)}$ follows. \\ 
Now, we have 
$$
E \Bigl| \sum_{k=1}^n f_k(V_i) f_{k+h}(V_j) q(V_i)^{-1/\alpha^{\prime}} q(V_j)^{-1/\alpha^{\prime}} \Bigr|^{\alpha^{\prime}} 
= \int_{E \times E} |S_n(f \times f)(x,y)|^{\alpha^{\prime}} (\mu \times \mu) (dx \; dy) 
$$
$$
= (a_n^{\prime})^{\alpha^{\prime}} (\mu \times \mu)(\varphi(x,y) \leq n) \int_{E \times E} \left| \frac{S_n(f \times f)(x,y)}{a_n^{\prime}} \right|^{\alpha^{\prime}} (\mu \times \mu)_n (dx \; dy)\,. 
$$
Note that
$$
\sup_{n \geq 1} \int_{E \times E} \left| \frac{S_n(f \times f)(x,y)}{a_n^{\prime}} \right| (\mu \times \mu)_n (dx \; dy) \leq \sup_{n \geq 1} \frac{n \mu(f)^2}{a_n^{\prime} (\mu \times \mu)(\varphi(x,y) \leq n)} < \infty\,.
$$
This means uniform integrability of $(|S_n(f \times f) / a_n^{\prime}|^{\alpha^{\prime}}, n\geq 1)$ with respect to $(\mu \times \mu)_n$, and hence, we have 
$$
\int_{E \times E} \left| \frac{S_n(f \times f)(x,y)}{a_n^{\prime}} \right|^{\alpha^{\prime}} (\mu \times \mu)_n (dx \; dy) \to \mu(f)^{2\alpha^{\prime}} \Gamma(2\beta)^{\alpha^{\prime}} E M_{2\beta-1}(1-V_{2\beta-1})^{\alpha^{\prime}} < \infty\,.
$$
On the other hand, \eqref{e:RV_index_cn} implies $c_n^{\alpha^{\prime}} \in RV_{\alpha^{\prime}(\beta + 2(1-\beta)/\alpha)}$. 
Thus, 
$$
E \left| \frac{1}{c_n} \sum_{k=1}^n f_k(V_i) f_{k+h}(V_j) q(V_i)^{-1/\alpha^{\prime}} q(V_j)^{-1/\alpha^{\prime}} \right|^{\alpha^{\prime}} \in RV_{(2\beta-1) \alpha^{\prime} + 2(1-\beta) - \alpha^{\prime}(\beta + 2(1-\beta)/\alpha))}.
$$
Owing to the constraint on $\xi$, we have $(2\beta-1)\alpha^{\prime} + 2(1-\beta) - \alpha^{\prime}(\beta + 2(1-\beta)/\alpha) < 0$, and hence, \eqref{e:imp.ingredient} is obtained.
\end{proof}

\begin{lemma}  \label{l:L_alpha_conv_3}
Under the setup of Theorem \ref{t:main_theorem1}, particularly let $0 < \alpha \leq 1$, $1/(2-\alpha) \leq \beta < 1$ and assume condition $(ii)$. Fix $0 < \xi < \alpha^2/(\alpha+2)$ and let $\alpha^{\prime} = \alpha - \xi$. Then, (\ref{e:imp.ingredient}) follows. 
\end{lemma}

\begin{proof}
We start by claiming that as $n \to \infty$,
\begin{equation}  \label{e:firststep_lemma3.3}
\frac{1}{a_n^{\prime}} \sum_{k=1}^n (\widehat{T \times T})^k \one_{A \times A}(x,y) \to \mu(A)^2 \ \ \ \text{uniformly, a.e. on } A \times A\,, 
\end{equation}
where
$$
a_n^{\prime} = \left( \frac{\Gamma(1+\beta)}{\Gamma(\beta)} \right)^2 \frac{\Gamma(2\beta-1)}{\Gamma(2\beta)} \frac{a_n^2}{n}\,.
$$
Indeed, from \eqref{e:bn_wn} and \eqref{e:uniformly_returning_set}, we see that as $n \to \infty$, 
$$
\sum_{k=1}^n (\widehat{T \times T})^k \one_{A \times A}(x,y) = \sum_{k=1}^n \widehat{T}^k \one_A(x) \widehat{T}^k \one_A(y) 
$$
$$
\sim \frac{\mu(A)^2}{\Gamma(\beta)^2 \Gamma(2-\beta)^2} \sum_{k=1}^n \frac{1}{w_k^2} \ \ \ \text{uniformly, a.e. on } A \times A\,.
$$
Applying Karamata's Tauberian theorem for power series to relation \eqref{e:prop3.8.7}, 
$$
\sum_{k=1}^n \frac{1}{w_k^2} \sim \Gamma(2-\beta)^2 \Gamma(1+\beta)^2 \frac{\Gamma(2\beta-1)}{\Gamma(2\beta)} \frac{a_n^2}{n} \ \ \ \text{as } n \to \infty\,.
$$
Thus, \eqref{e:firststep_lemma3.3} is obtained. 

Now, (\ref{e:firststep_lemma3.3}) ensures that $A \times A$ can be viewed as a Darling-Kac set for the product map $T \times T$. Thus, a careful inspection of Theorem 9.1 in \cite{thaler:zweimuller:2006} reveals that even if $T \times T$ is neither conservative nor ergodic, 
$$
\int_{E \times E} \left( \frac{S_n(\one_{A \times A})(x,y)}{a_n^{\prime}} \right)^r (\mu \times \mu) (dx \; dy)  \sim \mu(A)^{2r} r! \frac{\Gamma(2\beta)^{r-1}}{\Gamma(r(2\beta-1)+3-2\beta)} \frac{n}{a_n^{\prime}} \ \ \ \text{as } n \to \infty\,.
$$
Finally, we define a probability measure $(\mu \times \mu)_n(\cdot)$ by 
$$
(\mu \times \mu)_n (\cdot) = (\mu \times \mu)(\{ \varphi(x) \leq n, \varphi(y) \leq n \} \cap \cdot) / \mu(\varphi \leq n)^2
$$
(note that the above definition of $(\mu \times \mu)_n$ differs from (\ref{e:needconservativity})). Then, we have
$$
\int_{E \times E} \left( \frac{S_n(\one_{A \times A})(x,y)}{a_n^{\prime}} \right)^r (\mu \times \mu)_n (dx \; dy) = \frac{1}{\mu(\varphi \leq n)^2} \int_{E \times E} \left( \frac{S_n(\one_{A \times A})(x,y)}{a_n^{\prime}} \right)^r (\mu \times \mu) (dx \; dy)
$$
$$
\to \mu(A)^{2r} r! \frac{\Gamma(\beta)^2 \Gamma(2-\beta)^2 \Gamma(2\beta)^r}{\Gamma(2\beta-1) \Gamma(r(2\beta-1)+3-2\beta)}  \equiv \mu(A)^{2r} \eta_r \ \ \ \text{as } n \to \infty\,.
$$
The sequence $(\eta_r)$ determines, uniquely in law, a random variable $Z_{\beta}$, whose $r$th moment coincides with $\eta_r$ itself. To see this, it is enough to check the Carleman sufficient condition $\sum_{k=1}^{\infty} \eta_{2k}^{-1/2k} = \infty$; this can be easily checked by Stirling's formula together with elementary algebra. It is thus concluded that with respect to $(\mu \times \mu)_n$, as $n \to \infty$,
$$
\frac{S_n(\one_{A \times A})(x,y)}{a_n^{\prime}} \Rightarrow \mu(A)^2 Z_{\beta} \ \ \ \text{in } \mathbb{R}\,.
$$
Since $f$ is bounded and is supported by $A$, there is a constant $C_1 > 0$ such that
$$
E \Bigl| \sum_{k=1}^n f_k(V_i) f_{k+h}(V_j) q(V_i)^{-1/\alpha^{\prime}} q(V_j)^{-1/\alpha^{\prime}} \Bigr|^{\alpha^{\prime}} 
= \int_{E \times E} |S_n(f \times f)(x,y) |^{\alpha^{\prime}} (\mu \times \mu) (dx \; dy)
$$
$$
\leq C_1 \int_{E \times E} |S_n(\one_{A \times A})(x,y)|^{\alpha^{\prime}} (\mu \times \mu) (dx \; dy)
$$
$$
= C_1 (a_n^{\prime})^{\alpha^{\prime}} \mu(\varphi \leq n)^2 \int_{E \times E} \left| \frac{S_n(\one_{A \times A})(x,y)}{a_n^{\prime}} \right|^{\alpha^{\prime}} (\mu \times \mu)_n (dx \; dy)\,. 
$$
Because of the uniform integrability of $(|S_n(\one_{A \times A}) / a_n^{\prime}|^{\alpha^{\prime}}, n\geq 1)$ with respect to $(\mu \times \mu)_n$, we see that $\int_{E \times E} \left| S_n(\one_{A \times A}) / a_n^{\prime} \right|^{\alpha^{\prime}} d(\mu \times \mu)_n$ converges to some positive finite constant. The rest of the discussion is the same as Lemma \ref{l:L_alpha_conv_2}. 
\end{proof}

Finally we want to mention several inequalities that will be frequently used in the sequel. 

\begin{lemma} \label{l:UB_for_U1}
Fix $\xi>0$ as specified in Lemma \ref{l:L_alpha_conv_1}, \ref{l:L_alpha_conv_2}, or \ref{l:L_alpha_conv_3}. Let $\alpha^{\prime} = \alpha - \xi$ and define
$$
W_{ij}^{(n,\alpha^{\prime})} = \frac{1}{c_n} \sum_{k=1}^n f_k(V_i) f_{k+h}(V_j) q(V_i)^{-1/\alpha^{\prime}} q(V_j)^{-1/\alpha^{\prime}}\,.
$$
Let
$$
\ln_+ x = \begin{cases} \ln x & \text{if } x > 1, \\
0 & \text{otherwise}. 
\end{cases}
$$
(a) There exist an integer $m_0>0$ and constants $C>0$, $\gamma < \alpha^{\prime}$, such that for any $m \geq m_0$, 
$$
E \left| \sum_{m < i < j < \infty} \hspace{-10pt} \epsilon_i \epsilon_j U_{\alpha,1}^{\leftarrow} \left( \frac{\Gamma_i q(V_i)}{2} \right) U_{\alpha,1}^{\leftarrow} \left( \frac{\Gamma_j q(V_j)}{2} \right) \frac{1}{c_n} \sum_{k=1}^n f_k(V_i) f_{k+h}(V_j) \one_{\{ |W_{ij}^{(n,\alpha^{\prime})}|^{\alpha^{\prime}} \leq ij \}} \right|^{\alpha^{\prime}} 
$$
$$
\leq C \Bigl(E\bigl(|W_{ij}^{(n,\alpha^{\prime})}|^{\alpha^{\prime}}(1+\ln_+|W_{ij}^{(n,\alpha^{\prime})}|)\bigr)\Bigr)^{\gamma}, 
$$
$$
E \left| \sum_{m < i < j < \infty} \hspace{-10pt} \epsilon_i \epsilon_j U_{\alpha,1}^{\leftarrow} \left( \frac{\Gamma_i q(V_i)}{2} \right) U_{\alpha,1}^{\leftarrow} \left( \frac{\Gamma_j q(V_j)}{2} \right) \frac{1}{c_n} \sum_{k=1}^n f_k(V_i) f_{k+h}(V_j) \one_{\{ |W_{ij}^{(n,\alpha^{\prime})}|^{\alpha^{\prime}} > ij \}} \right|^{\alpha^{\prime}} 
$$
$$
\leq C E\bigl(|W_{ij}^{(n,\alpha^{\prime})}|^{\alpha^{\prime}}(1+\ln_+^2 |W_{ij}^{(n,\alpha^{\prime})}|)\bigr).
$$

\noindent (b) There exist an integer $m_0>0$ and constants $C>0$, $\gamma < \alpha^{\prime}$, such that for any $m \geq m_0$ and $i \geq 1$,
$$
E \left| \sum_{j=m+1}^{\infty} \epsilon_j U_{\alpha,1}^{\leftarrow} \left( \frac{\Gamma_j q(V_j)}{2} \right) \frac{1}{c_n} \sum_{k=1}^n f_k(V_i) f_{k+h}(V_j) q(V_i)^{-1/\alpha^{\prime}} \one_{\{ |W_{ij}^{(n,\alpha^{\prime})}|^{\alpha^{\prime}} \leq j \}} \right|^{\alpha^{\prime}} \leq C \bigl(E|W_{ij}^{(n,\alpha^{\prime})}|^{\alpha^{\prime}}\bigr)^{\gamma}\,,
$$
$$
E \left| \sum_{j=m+1}^{\infty} \epsilon_j U_{\alpha,1}^{\leftarrow} \left( \frac{\Gamma_j q(V_j)}{2} \right) \frac{1}{c_n} \sum_{k=1}^n f_k(V_i) f_{k+h}(V_j) q(V_i)^{-1/\alpha^{\prime}} \one_{\{ |W_{ij}^{(n,\alpha^{\prime})}|^{\alpha^{\prime}} > j \}} \right|^{\alpha^{\prime}} 
$$
$$
\leq C E\bigl(|W_{ij}^{(n,\alpha^{\prime})}|^{\alpha^{\prime}}(1+\ln_+|W_{ij}^{(n,\alpha^{\prime})}|)\bigr)\,.
$$
\end{lemma}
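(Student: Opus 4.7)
The approach follows the template of Proposition 4.5 in \cite{resnick:samorodnitsky:xue:1999}, adapted to our infinitely divisible setting. The first step is to exploit the Potter bound for the inverse function $U_{\alpha,1}^{\leftarrow}$, which is regularly varying of index $-1/\alpha$ at zero: for any prescribed $\xi>0$ there exists $C_0>0$ with $U_{\alpha,1}^{\leftarrow}(y/2) \leq C_0 y^{-1/\alpha^{\prime}}$ for all sufficiently large $y$. Choosing $m_0$ large enough that $\Gamma_i q(V_i)$ lies almost surely in this regime for $i \geq m_0$, we obtain the pointwise bound
$$
U_{\alpha,1}^{\leftarrow}\Bigl( \frac{\Gamma_i q(V_i)}{2} \Bigr) \leq C_0\, \Gamma_i^{-1/\alpha^{\prime}} q(V_i)^{-1/\alpha^{\prime}}, \ \ \ i \geq m_0\,.
$$
The $q(V_i)^{-1/\alpha^{\prime}}$ factors combine with $c_n^{-1}\sum_{k=1}^n f_k(V_i) f_{k+h}(V_j)$ to produce exactly $W_{ij}^{(n,\alpha^{\prime})}$, so every generic summand is dominated by $C_0^2\, \Gamma_i^{-1/\alpha^{\prime}} \Gamma_j^{-1/\alpha^{\prime}}\, |W_{ij}^{(n,\alpha^{\prime})}|$ and the $q$-dependence disappears.

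Next I would condition on the $\sigma$-field generated by $(V_i, \Gamma_i)_{i \geq 1}$ and bound the $\alpha^{\prime}$-th moment of the resulting double Rademacher sum. For $1 \leq \alpha^{\prime} < 2$, a decoupling argument combined with Jensen's inequality yields a Khintchine--Kahane style estimate
$$
E_{\epsilon} \Bigl| \sum_{m < i < j} \epsilon_i \epsilon_j\, b_{ij} \Bigr|^{\alpha^{\prime}} \leq K_{\alpha^{\prime}}\, \Bigl( \sum_{m < i < j} b_{ij}^2 \Bigr)^{\alpha^{\prime}/2},
$$
while for $0 < \alpha^{\prime} < 1$ the convexity advantage is lost and one instead uses the subadditivity bound $E_{\epsilon}|\sum_{i<j}\epsilon_i \epsilon_j b_{ij}|^{\alpha^{\prime}} \leq \sum_{i<j}|b_{ij}|^{\alpha^{\prime}}$. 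Integrating against the law of $(V_i, \Gamma_i)$, invoking independence of $(\Gamma_i)$ from $(V_i)$, and using the elementary estimate $E \Gamma_i^{-p/\alpha^{\prime}} \leq C_p\, i^{-p/\alpha^{\prime}}$ for $p < \alpha^{\prime}$ reduces each of the four inequalities to a series of the form $\sum_{i<j} i^{-\kappa_1} j^{-\kappa_2}\, E[\,|W_{ij}^{(n,\alpha^{\prime})}|^{\alpha^{\prime}} \one_{\{\cdot\}}\,]$ which is then estimated on the two indicator regions separately.

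On the \emph{small} region $\{|W_{ij}^{(n,\alpha^{\prime})}|^{\alpha^{\prime}} \leq ij\}$, raising $|W|$ above the $\alpha^{\prime}$-power costs a factor $(ij)^{\delta}$ for small $\delta>0$ that is absorbed into a marginally lighter $\Gamma$-moment, and a single application of H\"older's inequality produces the $\gamma$-th power bound with some $\gamma<\alpha^{\prime}$. On the \emph{large} region $\{|W_{ij}^{(n,\alpha^{\prime})}|^{\alpha^{\prime}} > ij\}$, the indicator forces $i \vee j \leq |W_{ij}^{(n,\alpha^{\prime})}|^{\alpha^{\prime}}$, so that each truncated tail $\sum_{i} i^{-1}\one_{\{i \leq |W|^{\alpha^{\prime}}\}}$ contributes a factor $1 + \ln_+|W_{ij}^{(n,\alpha^{\prime})}|^{\alpha^{\prime}}$ up to constants; in part (a) both the $i$-sum and the $j$-sum are active, delivering the $1+\ln_+^2|W_{ij}^{(n,\alpha^{\prime})}|$ factor, while in part (b) only the $j$-sum remains and yields a single $\ln_+$. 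The main technical obstacle is the regime $0 < \alpha^{\prime} < 1$, where the weaker Rademacher moment control must be compensated by a careful interpolation against the tail region so that the resulting bound still splits into a product of a convergent deterministic series and the sharp $\ln_+^2$-weighted expectation appearing in the statement.
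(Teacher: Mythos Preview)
Your approach is essentially the one the paper takes: the paper's proof simply refers to Proposition~5.1 of \cite{samorodnitsky:szulga:1989} (rather than \cite{resnick:samorodnitsky:xue:1999}, though the arguments are closely related) and notes that the only new ingredient is the pointwise bound $U_{\alpha,1}^{\leftarrow}(x)\le C\,x^{-1/\alpha'}$.

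One detail to fix: that bound holds for \emph{all} $x>0$, not just large $x$. The Potter estimate is needed as $x\downarrow 0$, where $U_{\alpha,1}^{\leftarrow}(x)$ blows up like $x^{-1/\alpha}$; for $x\ge\rho_\alpha(1,\infty)$ one has $U_{\alpha,1}^{\leftarrow}(x)=0$, so the inequality is trivial there. Your attempt to choose $m_0$ so that $\Gamma_i q(V_i)$ is almost surely large for $i\ge m_0$ cannot work, since the density $q$ has no positive lower bound on an infinite measure space and hence $\Gamma_i q(V_i)$ is not bounded below. Fortunately this is unnecessary once the bound is global; the actual role of $m_0$ in the Samorodnitsky--Szulga argument is only to ensure that the negative moments $E\Gamma_i^{-p/\alpha'}$ exist and behave like $i^{-p/\alpha'}$, which requires $i$ to exceed $p/\alpha'$.
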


\begin{proof}
The proof is analogous to that of Proposition 5.1 in \cite{samorodnitsky:szulga:1989}, but an obvious upper bound $U_{\alpha,1}^{\leftarrow}(x) \leq C x^{-1/\alpha^{\prime}}$, $x>0$ has to be suitably applied. 
\end{proof}

\begin{remark}  \label{rk:light_tail_inequality}
The inequalities in Lemma \ref{l:UB_for_U1} still holds, even if the parameter $\alpha^{\prime}$ and the inverse function $U_{\alpha,1}^{\leftarrow}(\cdot)$ are replaced by the constant $p_0$ given in (\ref{e:lower_tail_Levy}) and $U_{\alpha,2}^{\leftarrow}(\cdot)$, respectively.
\end{remark}

\section{Proof of Theorem \ref{t:main_theorem1}}

Now, we are ready to prove Theorem \ref{t:main_theorem1}. Before embarking on the proof, however, it would be beneficial to describe the outline of the proof. First of all, Proposition \ref{p:growth_rate_cn} below provides information on the asymptotics of $(c_n)$. Proposition \ref{p:diagonal-offdiagonal} then splits the series representation of $\sum_{k=1}^n X_k^{(l)} X_{k+h}^{(l)}$, $l=1,2$ into a diagonal part 
\begin{equation}  \label{e:diagonal}
Y_{n,l}^{\prime}(h) = \sum_{i=1}^{\infty} U_{\alpha,l}^{\leftarrow} \left( \frac{\Gamma_i q(V_i)}{2} \right)^2 \sum_{k=1}^n f_k(V_i) f_{k+h}(V_i)
\end{equation}
and an off-diagonal part
\begin{equation}  \label{e:off.diagonal}
Y_{n,l}^{\prime \prime}(h) = \sum_{i \neq j} \epsilon_i \epsilon_j U_{\alpha,l}^{\leftarrow} \left( \frac{\Gamma_i q(V_i)}{2} \right) U_{\alpha,l}^{\leftarrow} \left( \frac{\Gamma_j q(V_j)}{2} \right) \sum_{k=1}^n f_k(V_i) f_{k+h}(V_j)\,,
\end{equation}
and further, the diagonal part is shown to have integral representation. Subsequently, Proposition \ref{p:prop_for_1stterm} verifies weak convergence
$$
\left( \frac{Y_{n,1}^{\prime}(h)}{c_n},\, h=0,\dots,H \right) \Rightarrow \bigl( \mu(f \cdot f_h) W,\, h=0, \dots, H \bigr) \ \ \ \text{in } \bbr^{H+1}\,,
$$ 
as $n \to \infty$, where $W$ is defined in \eqref{e:pos_stable_subordinator}. Propositions \ref{p:prop_for_2ndterm}, \ref{p:prop_for_3rdterm} and \ref{p:prop_for_4thterm} then prove the followings, respectively:
\begin{align*}
c_n^{-1} Y_{n,1}^{\prime \prime}(h) &\stackrel{p}{\to} 0\,, \ \ h=0,1,\dots,H\,, \\
c_n^{-1} Y_{n,2}^{\prime}(0) &\stackrel{p}{\to} 0\,, \ \ \text{and} \\
c_n^{-1} Y_{n,2}^{\prime \prime}(0) &\stackrel{p}{\to} 0\,.
\end{align*}
Therefore, because of the stationarity of the process $\BX=(X_1,X_2,\dots)$, the leading term on the right hand side of \eqref{e:decomp.4parts} turns out to be $c_n^{-1} Y_{n,1}^{\prime}(h)$ and all the others will vanish as $n \to \infty$; hence, the proof of Theorem \ref{t:main_theorem1} will be completed. 

\begin{proposition} \label{p:growth_rate_cn}
Under the assumptions of Theorem \ref{t:main_theorem1}, 
\begin{equation}  \label{e:main.target}
\left( \int_E |S_n(f^2)|^{\alpha/2} d\mu \right)^{2/\alpha} \sim \mu(f^2) C_{\alpha,\beta} a_n w_n^{2/\alpha}, \ \ \ \text{as } n \to \infty\,,
\end{equation}
and 
$$
\rho_{\alpha}\bigl( (c_n a_n^{-1})^{1/2},\infty \bigr) \sim 2^{-1} C_{\alpha/2} C_{\alpha, \beta}^{-\alpha/2} w_n^{-1} \ \ \text{as } n \to \infty\,,
$$
where $C_{\alpha,\beta}$ is a positive constant given by \eqref{e:def.C_al.be}, and $C_{\alpha/2}$ is a tail constant of an $\alpha/2$-stable random variable. 
\end{proposition}

\begin{proof}
The second asymptotic relation is easy to check from the definition of $(c_n)$, so in what follows, we only prove \eqref{e:main.target}. 
We write
$$
\left( \int_E |S_n(f^2)|^{\alpha/2} d\mu \right)^{2/\alpha} =  a_n \mu(\varphi \leq n)^{2/\alpha} \left( \int_E \left| \frac{S_n(f^2)}{a_n} \right|^{\alpha/2} d\mu_n \right)^{2/\alpha},
$$
where $\mu_n(\cdot) = \mu \bigl( \cdot \cap \{ \varphi \leq n \} \bigr) / \mu(\varphi \leq n)$. 
Because of uniform integrability of $( |S_n(f^2) / a_n|^{\alpha/2},\, n \geq 1 )$ with respect to $\mu_n$, Lemma \ref{l:DK_theorem} implies 
$$
\left( \int_E \left| \frac{S_n(f^2)}{a_n} \right|^{\alpha/2} d\mu_n \right)^{2/\alpha} \to \mu(f^2) C_{\alpha,\beta} \ \ \ \text{as } n \to \infty\,. 
$$
\end{proof}

\begin{proposition}  \label{p:diagonal-offdiagonal}
Under the assumptions of Theorem \ref{t:main_theorem1}, let $H \geq 0$, $n > 0$, and $l=1,2$. Then we write
$$
\left(  \sum_{k=1}^n X_k^{(l)} X_{k+h}^{(l)}, \, h=0,\dots,H \right) = \bigl( Y_{n,l}^{\prime}(h) + Y_{n,l}^{\prime \prime}(h),\,  h=0,\dots,H \bigr)
$$
with $Y_{n,l}^{\prime}(h)$ and $Y_{n,l}^{\prime \prime}(h)$ given in \eqref{e:diagonal} and \eqref{e:off.diagonal}.
Furthermore, $Y_{n,l}^{\prime}(h)$ is represented in law by 
\begin{equation}  \label{e:stoch.pos.rep}
Y_{n,l}^{\prime}(h) \stackrel{d}{=} \int_E \sum_{k=1}^n f_k(x) f_{k+h}(x) d \widetilde{M}_l(x)\,.
\end{equation}
Here, $\widetilde{M}_l$ is a positive \id\ random measure defined by
$$
E e^{iu \widetilde{M}_l (A)} = \exp \{ \mu(A) \int_{(0,\infty)} (e^{iux} - 1) \widetilde{\rho}_{\frac{\alpha}{2},l}(dx) \}\,, \ \ u \in \bbr\,, 
$$
where $\widetilde{\rho}_{\frac{\alpha}{2},l}$ is a local L\'{e}vy measure concentrated on the positive half-line such that
\begin{equation}  \label{e:positive_Levy}
\widetilde{\rho}_{\frac{\alpha}{2},l} (x,\infty) = 2 \rho_{\alpha,l}(x^{1/2},\infty) \ \ \ \text{for } x>0\,.  
\end{equation}
\end{proposition}

\begin{proof}
Clearly, it suffices to show \eqref{e:stoch.pos.rep}. In view of \cite{rosinski:1990}, we only check that
\begin{equation}  \label{e:fisrtRosinski}
\int_0^{\infty} P \left( U_{\alpha,l}^{\leftarrow} \left( \frac{r q(V_1)}{2} \right)^2 \sum_{k=1}^n f_k(V_1) f_{k+h}(V_1) \in \cdot \right) dr 
= (\widetilde{\rho}_{\frac{\alpha}{2}, l} \times \mu) \{ (v,x): v \sum_{k=1}^n f_k(x) f_{k+h}(x) \in \cdot  \}   
\end{equation}
and 
\begin{equation}   \label{e:secondRosinski}
\int_E \int_{\mathbb{R}} \min \left(1,  \left|v \sum_{k=1}^n f_k(x) f_{k+h}(x) \right| \right) \widetilde{\rho}_{\frac{\alpha}{2}, l} (dv) \mu(dx) < \infty\,. 
\end{equation}
Note that the right hand side of \eqref{e:fisrtRosinski} is exactly equal to the L\'{e}vy measure of $Y_{n,l}^{\prime}(h)$. 

Since a simple calculation verifies \eqref{e:fisrtRosinski}, we only prove \eqref{e:secondRosinski}. By regular variation of the local L\'{e}vy measure $\rho_{\alpha}$, the Potter bound (e.g., Proposition 0.8 in \cite{resnick:1987}) provides
$$
\widetilde{\rho}_{\frac{\alpha}{2},1}(x,\infty) \leq C_1 x^{-(\alpha-\xi)/2}, \ \ \  x> 0
$$
for some constants $0 < \xi < \alpha$ and $C_1>0$. Also by \eqref{e:lower_tail_Levy}, we get an obvious upper bound; for some $C_2>0$,
$$
\widetilde{\rho}_{\frac{\alpha}{2},2}(x,\infty) \leq C_2 x^{-p_0/2}, \ \ \ x > 0\,.
$$
These bounds, together with the fact that $f$ has a support of finite $\mu$-measure and $f \in L^2(\mu)$, can establish \eqref{e:secondRosinski}.
\end{proof}

\begin{proposition}  \label{p:prop_for_1stterm}
Under the assumptions of Theorem \ref{t:main_theorem1}, for any $H \geq 0$, we have as $n \to \infty$,
$$
\left( \frac{Y_{n,1}^{\prime}(h)}{c_n},\, h=0,\dots,H \right) \Rightarrow \bigl( \mu(f \cdot f_h) W, \, h=0, \dots, H \bigr) \ \ \ \text{in } \bbr^{H+1}\,,
$$
where $W$ is defined in \eqref{e:pos_stable_subordinator}.
\end{proposition}

\begin{proof}
By virtue of the Cramer-Wold device, we only have to show that as $n \to \infty$,
\begin{equation}  \label{e:CramerWold}
\frac{1}{c_n} \sum_{h=0}^H \theta_h Y_{n,1}^{\prime}(h) \Rightarrow \sum_{h=0}^H \theta_h \mu(f \cdot f_h) W \ \  \text{in } \mathbb{R} 
\end{equation}
for every $\theta_0, \dots, \theta_H \in \mathbb{R}$. Let $\phi(x) = f(x) \sum_{h=0}^H \theta_h f_h(x)$. Then, \eqref{e:CramerWold} is equivalent to 
\begin{equation}  \label{e:phiCramerWold}
\frac{1}{c_n} \int_E S_n(\phi)(x) d \widetilde{M}_1(x) \Rightarrow \mu(\phi) W \ \ \ \text{in } \mathbb{R}\,.
\end{equation}
A sufficient condition for weak convergence of the left hand side in \eqref{e:phiCramerWold} reduces to the following (e.g., Theorem 13.14 in \cite{kallenberg:1997}): for every $r>0$, as $n \to \infty$,
\begin{equation}  \label{e:firstKallen} 
\int_E \left( \frac{S_n(\phi)}{c_n} \right)^2 \int_0^{r c_n |S_n(\phi)|^{-1}} \hspace{-20pt} x \widetilde{\rho}_{\frac{\alpha}{2},1} (x,\infty) dx d\mu \to \frac{r^{2-\alpha/2}C_{\alpha/2}}{2-\alpha/2} | \mu(\phi)|^{\alpha/2}\,, 
\end{equation}
\begin{equation}  \label{e:secondKallen}
\int_E \widetilde{\rho}_{\frac{\alpha}{2},1} (rc_n |S_n(\phi)|^{-1}, \infty) d\mu \to r^{-\alpha/2} C_{\alpha/2} | \mu(\phi)|^{\alpha/2}\,,
\end{equation}
and
\begin{equation}  \label{e:thirdKallen}
\int_E \frac{S_n(\phi)}{c_n} \int_0^{c_n |S_n(\phi)|^{-1}} \hspace{-20pt} \widetilde{\rho}_{\frac{\alpha}{2},1} (x,\infty) dx d\mu \to \frac{2C_{\alpha/2}}{2-\alpha} sgn(\mu(\phi)) | \mu(\phi)|^{\alpha/2}
\end{equation}
($sgn(u)=u/|u|$ if $u \neq 0$ and $sgn(0) = 0$). 
We only prove \eqref{e:firstKallen}, because \eqref{e:secondKallen} and \eqref{e:thirdKallen} can be handled analogously.  

For \eqref{e:firstKallen}, we need to use the result of Lemma \ref{l:DK_theorem}: as $n \to \infty$,
$$
\frac{S_n(\phi)}{a_n} \Rightarrow \mu(\phi) \Gamma(1+\beta) M_{\beta}(1-V_{\beta}) \ \ \text{in } \bbr\,,
$$
where the weak convergence takes place under a probability measure $\mu_n(\cdot) = \mu(\cdot \cap \{ \varphi \leq n \}) / \mu(\varphi \leq n)$. $( M_{\beta}(t) )$ is the Mittag-Leffler process defined on some probability space $(\Omega^{\prime}, \mathcal{F}^{\prime}, P^{\prime})$ (the definition is given in \eqref{e:MLprocess}), and $V_{\beta}$ is a random variable defined on the same probability space with density given by \eqref{e:pdfTinf}. Here, $M_{\beta}(t)$ and $V_{\beta}$ are independent. 

Applying the Skorohod's embedding theorem, there exist random variables $Y$ and $Y_n$, $n=1,2,\dots$ defined on some probability space $(\Omega^*,\mathcal{F}^*,P^*)$ such that
\begin{align*}
P^* \circ Y_n^{-1} &= \mu_n \circ \left( \frac{S_n(\phi)}{a_n} \right)^{-1}, \ \ \ n=1,2,\dots, \\
P^* \circ Y^{-1} &= P^{\prime} \circ \bigl(\mu(\phi) \Gamma(1+\beta) M_{\beta}(1-V_{\beta})\bigr)^{-1}\,, \\
Y_n &\to Y \ \ \text{as } n \to \infty, \ P^* \text{-a.s.}.
\end{align*}
Let $\psi(y) = y^{-2} \int_0^{ry} x \widetilde{\rho}_{\frac{\alpha}{2},1}(x,\infty) dx$, then we can proceed
$$
\int_E \left( \frac{S_n(\phi)}{c_n} \right)^2 \int_0^{rc_n |S_n(\phi)|^{-1}} \hspace{-20pt} x \widetilde{\rho}_{\frac{\alpha}{2},1} (x,\infty) dx d\mu = \int_E \psi \left( \frac{c_n}{|S_n(\phi)|} \right) d\mu 
$$
$$
= \mu(\varphi \leq n) E^* \Bigl[ \psi \Bigl( \frac{c_n}{a_n |Y_n|} \Bigr) \Bigr]. 
$$
It follows from \eqref{e:RV_index_cn} that $c_n a_n^{-1} |Y_n|^{-1} \to \infty$, $P^*$-a.s.. Therefore, Karamata's theorem (e.g., Theorem 0.6 in \cite{resnick:1987}) yields 
$$
\psi \left( \frac{c_n}{a_n |Y_n|} \right) \sim \frac{r^2}{2-\alpha/2} \widetilde{\rho}_{\frac{\alpha}{2},1} (rc_n a_n^{-1} |Y_n|^{-1}, \infty) \ \ \  \text{as } n \to \infty, \ P^* \text{-a.s.}
$$
From uniform convergence theorem of regularly varying functions of negative indices (e.g., Proposition 0.5 in \cite{resnick:1987}), we can say that
$$
\widetilde{\rho}_{\frac{\alpha}{2},1} (rc_n a_n^{-1} |Y_n|^{-1}, \infty) \sim r^{-\alpha/2} |Y_n|^{\alpha/2} \widetilde{\rho}_{\frac{\alpha}{2},1} (c_n a_n^{-1}, \infty) \ \ \ \text{as } n \to \infty, \ \ P^* \text{-a.s.}
$$
From Proposition \ref{p:growth_rate_cn} and \eqref{e:positive_Levy}, 
$$
\mu(\varphi \leq n) \psi \left( \frac{c_n}{a_n |Y_n|} \right) \sim \frac{r^{2-\alpha/2}}{2-\alpha/2} \mu(\varphi \leq n) |Y_n|^{\alpha/2} \widetilde{\rho}_{\frac{\alpha}{2},1} (c_n a_n^{-1}, \infty) 
$$
$$ 
\to \frac{r^{2-\alpha/2}C_{\alpha/2}}{2-\alpha/2} C_{\alpha,\beta}^{-\alpha/2} |Y|^{\alpha/2} \ \ \ \text{as } n \to \infty, \ \ \ P^* \text{-a.s.} 
$$
Integrating the limit yields
$$
E^* \left[ \frac{r^{2-\alpha/2}C_{\alpha/2}}{2-\alpha/2} C_{\alpha,\beta}^{-\alpha/2}  |Y|^{\alpha/2} \right] = \frac{r^{2-\alpha/2}C_{\alpha/2}}{2-\alpha/2} |\mu(\phi)|^{\alpha/2}\,,
$$
which is exactly the right hand side of \eqref{e:firstKallen}. 
Now, to finish the proof, we need to justify taking the limit under the integral. For this, we will apply the so-called Pratt's lemma (see \cite{pratt:1960}). According to Pratt's lemma, we must find a sequence of measurable functions $G_0, G_1, \dots$ defined on $(\Omega^*,\mathcal{F}^*,P^*)$ such that 
\begin{align}
\mu(\varphi \leq n) \psi \left( \frac{c_n}{a_n |Y_n|} \right) &\leq G_n \ \ \ P^* \text{-a.s.}, \ \ \ n=1,2,\dots, \label{e:firstPratt} \\
G_n &\to G_0 \ \ \ \text{as } n \to \infty \ \ \ P^* \text{-a.s.}, \; \text{and} \label{e:secondPratt} \\
E^* G_n &\to E^* G_0 \ \ \ \text{as } n \to \infty\,. \label{e:thirdPratt}
\end{align}
For \eqref{e:firstPratt}, there is a $C_1>0$ such that
$$
\mu(\varphi \leq n) \psi \left( \frac{c_n}{a_n |Y_n|} \right) \leq C_1 \frac{\psi(c_n a_n^{-1} |Y_n|^{-1})}{\psi(c_n a_n^{-1})}
$$
because $\mu(\varphi \leq n) \psi(c_n a_n^{-1})$ has a positive and finite limit. \\
Applying the Potter bound, for any fixed $0 < \xi < \min(\alpha, 2-\alpha)$, we have
$$
\frac{\psi(c_n a_n^{-1} |Y_n|^{-1})}{\psi(c_n a_n^{-1})} \one_{\{ c_n > a_n |Y_n| \}} \leq C_2 (|Y_n|^{(\alpha-\xi)/2} + |Y_n|^{(\alpha+\xi)/2})
$$
for some $C_2>0$. \\
Since $\psi$ is bounded on $(0,1]$, for some constant $C_3 \geq C_2$, 
$$
\frac{\psi(c_n a_n^{-1} |Y_n|^{-1})}{\psi(c_n a_n^{-1})} \one_{\{ c_n \leq a_n |Y_n| \}} \leq \frac{C_3}{\psi(c_n a_n^{-1})} \frac{a_n}{c_n} |Y_n|\,.
$$
Therefore, we may write
$$
\mu(\varphi \leq n) \psi \left( \frac{c_n}{a_n |Y_n|} \right) \leq C_3 \left( |Y_n|^{(\alpha-\xi)/2} + |Y_n|^{(\alpha+\xi)/2} + \frac{a_n}{c_n} \frac{|Y_n|}{\psi(c_n a_n^{-1})} \right)\,.
$$
Now, \eqref{e:firstPratt} is obtained by taking \\
$$
G_n = C_3 \left( |Y_n|^{(\alpha-\xi)/2} + |Y_n|^{(\alpha+\xi)/2} + \frac{a_n}{c_n} \frac{|Y_n|}{\psi(c_n a_n^{-1})} \right), \ \ \ n=1,2,\dots.
$$
Let
$$
G_0 = C_3 \left( |Y|^{(\alpha-\xi)/2} + |Y|^{(\alpha+\xi)/2} \right)\,.
$$
We know that $a_n c_n^{-1} \in RV_{-2(1-\beta)/\alpha}$ and $\psi(c_n a_n^{-1}) \in RV_{\beta-1}$; thus, 
$$
\frac{a_n}{c_n} \frac{1}{\psi(c_n a_n^{-1})} \to 0 \ \ \ \text{as } n \to \infty
$$
from which \eqref{e:secondPratt} follows. 

To show \eqref{e:thirdPratt}, recall that $\sup_{n \geq 1} E^*|Y_n| < \infty$ (see the proof of Proposition \ref{p:growth_rate_cn}). 
Thus, $( |Y_n|^{(\alpha \pm \xi)/2}, \, n \geq 1 )$ is uniformly integrable with respect to $P^*$, which in turn implies \eqref{e:thirdPratt}. Now, Pratt's lemma is applicable and \eqref{e:firstKallen} is complete.  
\end{proof}

\begin{proposition} \label{p:prop_for_2ndterm}
Under the assumptions of Theorem \ref{t:main_theorem1}, as $n \to \infty$,
$$
\frac{1}{c_n} Y_{n,1}^{\prime \prime}(h) = \sum_{i \neq j} \epsilon_i \epsilon_j U_{\alpha,1}^{\leftarrow} \left( \frac{\Gamma_i q(V_i)}{2} \right) U_{\alpha,1}^{\leftarrow} \left( \frac{\Gamma_j q(V_j)}{2} \right) \frac{1}{c_n} \sum_{k=1}^n f_k(V_i) f_{k+h} (V_j) \stackrel{p}{\to} 0\,, \ \ \ h = 0,1,2\dots. 
$$
\end{proposition}

\begin{proof}
Choose $\xi > 0$ as specified in Lemma \ref{l:L_alpha_conv_1} if $(\alpha,\beta)$ lies in the range \eqref{e:first_range}, and otherwise, choose $\xi> 0$ as specified in Lemma \ref{l:L_alpha_conv_2} or \ref{l:L_alpha_conv_3}. In either case, let $\alpha^{\prime} = \alpha - \xi$. 
For $i \neq j$, we set
$$
W_{ij}^{(n, \alpha^{\prime})} = \frac{1}{c_n} \sum_{k=1}^n f_k(V_i) f_{k+h}(V_j) q(V_i)^{-1/\alpha^{\prime}} q(V_j)^{-1/\alpha^{\prime}}\,.
$$
Note that these three lemmas have simultaneously demonstrated 
\begin{equation}  \label{e:needtwodimdiscussion_mb}
E \left| W_{ij}^{(n, \alpha^{\prime})} \right|^{\alpha^{\prime}} \to 0\,, \ \ \ \text{as } n \to \infty \ \textrm{for } i \neq j\,.
\end{equation}
In the sequel, we will basically follow the argument in Proposition 4.3 of \cite{resnick:samorodnitsky:xue:1999}. Denote
$$
\widetilde{W}_{ij}^{(n)} = \epsilon_i \epsilon_j U_{\alpha,1}^{\leftarrow} \left( \frac{\Gamma_i q(V_i)}{2} \right) U_{\alpha,1}^{\leftarrow} \left( \frac{\Gamma_j q(V_j)}{2} \right) \frac{1}{c_n} \sum_{k=1}^n f_k(V_i) f_{k+h} (V_j) \ \ \text{for } i \neq j.
$$
Owing to symmetry of the doubly infinite sum, we only have to show that $\sum_{i < j} \widetilde{W}_{ij}^{(n)} \stackrel{p}{\to} 0$. According to Lemma \ref{l:UB_for_U1}, there exist an integer $m_0$ and constants $C > 0$ and $\gamma < \alpha^{\prime}$ such that for any $m \geq m_0$, all the inequalities given in (a) and (b) of that lemma hold.  

Next, we decompose $\sum_{i < j} \widetilde{W}_{ij}^{(n)}$ into three summands
$$
\sum_{i < j} \widetilde{W}_{ij}^{(n)} = \sum_{i=1}^{m_0} \sum_{j=i+1}^{m_0} \widetilde{W}_{ij}^{(n)} + \sum_{i=1}^{m_0} \sum_{j=m_0+1}^{\infty} \widetilde{W}_{ij}^{(n)} + \sum_{m_0 < i < j < \infty} \widetilde{W}_{ij}^{(n)}\,.
$$
Now, we only need to prove the following: as $n \to \infty$,
$$
(i): \, \widetilde{W}_{ij}^{(n)} \stackrel{p}{\to} 0 \ \ \text{for all } i, j;
$$
$$
(ii):  \sum_{j=m_0+1}^{\infty} \widetilde{W}_{ij}^{(n)} \stackrel{p}{\to} 0 \ \ \text{for all } i; 
$$
$$
(iii): \sum_{m_0 < i < j < \infty} \widetilde{W}_{ij}^{(n)} \stackrel{p}{\to} 0\,.
$$
By the bound $U_{\alpha,1}^{\leftarrow}(x) < C x^{-1/\alpha^{\prime}}$ and \eqref{e:needtwodimdiscussion_mb}, it is evident that $\widetilde{W}_{ij}^{(n)}$ converges to $0$ in probability, which proves (i). For $(ii)$ and $(iii)$, by virtue of the inequalities given in Lemma \ref{l:UB_for_U1}, it suffices to show that
$$
E(|W_{ij}^{(n, \alpha^{\prime})}|^{\alpha^{\prime}} (1 + \ln_+^2|W_{ij}^{(n, \alpha^{\prime})}|)) \to 0\,, \ \ \ \text{as } n \to \infty, \ i \neq j\,.
$$
To show this, let
$$
B^{(n, \alpha^{\prime})}(x,y) = \frac{1}{c_n} \sum_{k=1}^n f_k(x) f_{k+h}(y) q(x)^{-1/\alpha^{\prime}} q(y)^{-1/\alpha^{\prime}}. 
$$
Here, it is important to note that the choice of the density $q$ does not affect the distribution of $Y_{n,1}^{\prime \prime}(h)$; therefore, we can particularly take 
$$
q(x) = Q(x) \left( \int_E Q(u) d\mu \right)^{-1},
$$
where 
$$
Q(x) = \max \Bigl( q_0(x), \bigl(\sum_{k=1}^{n+h} f_k(x)^2 \bigr)^{\alpha^{\prime}/2} \Bigr)\,.
$$
Here, $q_0:E \to (0,\infty)$ is an arbitrarily selected, strictly positive density. \\
By the Cauchy-Schwarz inequality, 
$$
\sup_{x,y \in E} |B^{(n,\alpha^{\prime})}(x,y)| \leq \sup_{x,y \in E} \frac{1}{c_n} \bigl(\sum_{k=1}^n f_k(x)^2 \bigr)^{1/2} \bigl(\sum_{k=1}^n f_{k+h}(y)^2 \bigr)^{1/2} q(x)^{-1/\alpha^{\prime}} q(y)^{-1/\alpha^{\prime}} 
$$
$$
\leq \frac{1}{c_n} \Bigl( 1 + \int_E (\sum_{k=1}^{n+h} f_k(x)^2)^{\alpha^{\prime}/2} d\mu \Bigr)^{2/\alpha^{\prime}} \in RV_{2(1-\beta)(\frac{1}{\alpha^{\prime}} - \frac{1}{\alpha})}\,,
$$
where the last regular variation index is obtained from \eqref{e:RV_index_cn}. \\
Now, we have
$$
E\bigl(|W_{ij}^{(n,\alpha^{\prime})}|^{\alpha^{\prime}} (1 + \ln_+^2|W_{ij}^{(n,\alpha^{\prime})}|)\bigr) \leq \bigl(1 + \ln_+^2 \sup_{x,y \in E} |B^{(n,\alpha^{\prime})}(x,y)| \bigr) E|W_{ij}^{(n,\alpha^{\prime})}|^{\alpha^{\prime}}\,.
$$
Observe that $E|W_{ij}^{(n,\alpha^{\prime})}|^{\alpha^{\prime}}$ has a negative regular variation exponent (see the proofs of Lemmas \ref{l:L_alpha_conv_1}, \ref{l:L_alpha_conv_2}, and \ref{l:L_alpha_conv_3}), and hence, the right hand side vanishes as $n \to \infty$.
\end{proof}

\begin{proposition} \label{p:prop_for_3rdterm}
Under the assumptions of Theorem \ref{t:main_theorem1},
$$
\frac{1}{c_n} Y_{n,2}^{\prime}(0) \stackrel{d}{=} \frac{1}{c_n} \int_E S_n(f^2)(x) d \widetilde{M}_2(x) \stackrel{p}{\to} 0 \ \ \ \text{as } n \to \infty\,.
$$
\end{proposition}

\begin{proof}
From the standard argument for convergence in law of the sequence of infinitely divisible random variables (e.g., Theorem 13.14 in \cite{kallenberg:1997}), we only have to check that as $n \to \infty$,
$$
\int_E \left( \frac{S_n(f^2)}{c_n} \right)^2 \int_0^{c_nS_n(f^2)^{-1}} \hspace{-20pt} x \widetilde{\rho}_{\frac{\alpha}{2}, 2}(x,\infty) dx d\mu \to 0\,,
$$
$$
\int_E \widetilde{\rho}_{\frac{\alpha}{2},2}(c_nS_n(f^2)^{-1}, \infty) d\mu \to 0\,, 
$$
and
$$
\int_E  \frac{S_n(f^2)}{c_n} \int_0^{c_nS_n(f^2)^{-1}} \hspace{-20pt} \widetilde{\rho}_{\frac{\alpha}{2}, 2}(x,\infty) dx d\mu \to 0\,.
$$
An obvious upper bound $\widetilde{\rho}_{\frac{\alpha}{2}, 2}(x,\infty) \leq C x^{-p_0/2}$, $x > 0$, and the integrability condition $f \in L^2(\mu)$ easily prove these limits. 
\end{proof}

\begin{proposition} \label{p:prop_for_4thterm}
Under the assumptions of Theorem \ref{t:main_theorem1},
$$
\frac{1}{c_n} Y_{n,2}^{\prime \prime}(0) = \sum_{i \neq j} \epsilon_i \epsilon_j U_{\alpha,2}^{\leftarrow} \left( \frac{\Gamma_i q(V_i)}{2} \right) U_{\alpha,2}^{\leftarrow} \left( \frac{\Gamma_j q(V_j)}{2} \right) \frac{1}{c_n} \sum_{k=1}^n f_k(V_i) f_k (V_j) \stackrel{p}{\to} 0 \ \ \ \text{as } n \to \infty\,.
$$
\end{proposition}

\begin{proof}
The proof is analogous to that of Proposition \ref{p:prop_for_2ndterm}. Taking advantage of the inequalities given in Lemma \ref{l:UB_for_U1} (see also Remark \ref{rk:light_tail_inequality}), the proof will be finished if
$$
E\bigl(|W_{ij}^{(n,p_0)}|^{p_0} (1 + \ln_+^2 |W_{ij}^{(n,p_0)}|)\bigr) \to 0\,, \ \ \ \text{as } n \to \infty, \ i \neq j\,.
$$
The argument for showing this is mostly the same as in Proposition \ref{p:prop_for_2ndterm}, so we omit it. 
\end{proof}

\bigskip

{\bf Acknowledgement} First of all, the author should like to express his gratitude to professor Gennady Samorodnitsky for his discussion and helpful comments throughout this research. The author is also extremely grateful to two anonymous referees and an anonymous Associate Editor whose comments led to a substantial improvement of the presentation of the paper.


\begin{thebibliography}{34}
\newcommand{\enquote}[1]{``#1''}
\expandafter\ifx\csname natexlab\endcsname\relax\def\natexlab#1{#1}\fi

\bibitem[\protect\citeauthoryear{Aaronson}{Aaronson}{1981}]{aaronson:1981}
\textsc{Aaronson, J.} (1981): \enquote{The asymptotic distributional behaviour
  of transformations preserving infinite measures,} \emph{Journal d'Analyse
  Math\'ematique}, 39, 203--234.

\bibitem[\protect\citeauthoryear{Aaronson}{Aaronson}{1997}]{aaronson:1997}
---\hspace{-.1pt}---\hspace{-.1pt}--- (1997): \emph{An Introduction to Infinite
  Ergodic Theory, {\rm volume 50 of} Mathematical Surveys and Monographs},
  Providence: American Mathematical Society.

\bibitem[\protect\citeauthoryear{Bingham}{Bingham}{1971}]{bingham:1971}
\textsc{Bingham, N.} (1971): \enquote{Limit theorems for occupation times of
  Markov processes,} \emph{Probability Theory and Related Fields}, 17, 1--22.

\bibitem[\protect\citeauthoryear{Bingham, Goldie, and Teugels}{Bingham
  et~al.}{1987}]{bingham:goldie:teugels:1987}
\textsc{Bingham, N., C.~Goldie, and J.~Teugels} (1987): \emph{Regular
  Variation}, Cambridge: Cambridge University Press.

\bibitem[\protect\citeauthoryear{Brockwell and Davis}{Brockwell and
  Davis}{1991}]{brockwell:davis:1991}
\textsc{Brockwell, P. and R.~Davis} (1991): \emph{Time Series: Theory and
  Method, {\rm 2nd edition}}, New York: Springer.

\bibitem[\protect\citeauthoryear{Davis and Mikosch}{Davis and
  Mikosch}{1998}]{davis:mikosch:1998}
\textsc{Davis, R. and T.~Mikosch} (1998): \enquote{The sample autocorrelations
  of heavy-tailed processes with applications to ARCH,} \emph{The Annals of
  Statistics}, 26, 2049--2080.

\bibitem[\protect\citeauthoryear{Davis and Resnick}{Davis and
  Resnick}{1986}]{davis:resnick:1986}
\textsc{Davis, R. and S.~Resnick} (1986): \enquote{Limit theory for the sample
  covariance and correlation functions of moving averages,} \emph{The Annals of
  Statistics}, 14, 533--558.

\bibitem[\protect\citeauthoryear{Davis and Resnick}{Davis and
  Resnick}{1996}]{davis:resnick:1996}
---\hspace{-.1pt}---\hspace{-.1pt}--- (1996): \enquote{Limit theory for
  bilinear processes with heavy-tailed noise,} \emph{The Annals of Applied
  Probability}, 6, 1191--1210.

\bibitem[\protect\citeauthoryear{Harris and Robbins}{Harris and
  Robbins}{1953}]{harris:robbins:1953}
\textsc{Harris, T. and H.~Robbins} (1953): \enquote{Ergodic theory of Markov
  chains admitting an infinite invariant measure,} \emph{Proceedings of the
  National Academy of Sciences of the United States of America}, 39, 860--864.

\bibitem[\protect\citeauthoryear{Kallenberg}{Kallenberg}{1997}]{kallenberg:1997}
\textsc{Kallenberg, O.} (1997): \emph{Foundations of Modern Probability, {\rm
  1st edition}}, New York: Springer.

\bibitem[\protect\citeauthoryear{Kesseb\"{o}hmer and Slassi}{Kesseb\"{o}hmer
  and Slassi}{2007}]{kessebohmer:slassi:2007}
\textsc{Kesseb\"{o}hmer, M. and M.~Slassi} (2007): \enquote{Limit laws for
  distorted critical return time processes in infinite ergodic theory,}
  \emph{Stochastics and Dynamics}, 7, 103--121.

\bibitem[\protect\citeauthoryear{Krengel}{Krengel}{1985}]{krengel:1985}
\textsc{Krengel, U.} (1985): \emph{Ergodic Theorems}, Berlin: de Gruyter.

\bibitem[\protect\citeauthoryear{Owada and Samorodnitsky}{Owada and
  Samorodnitsky}{2012}]{owada:samorodnitsky:2012}
\textsc{Owada, T. and G.~Samorodnitsky} (2012): \enquote{Functional central
  limit theorem for heavy tailed stationary infinitely divisible processes
  generated by conservative flows,} Available at: {\tt
  http://arxiv.org/pdf/1209.3957v1.pdf}.

\bibitem[\protect\citeauthoryear{Pratt}{Pratt}{1960}]{pratt:1960}
\textsc{Pratt, J.} (1960): \enquote{On interchanging limits and integrals,}
  \emph{Annals of Mathematical Statistics}, 31, 74--77.

\bibitem[\protect\citeauthoryear{Rajput and Rosi\'nski}{Rajput and
  Rosi\'nski}{1989}]{rajput:rosinski:1989}
\textsc{Rajput, B. and J.~Rosi\'nski} (1989): \enquote{Spectral representations
  of infinitely divisible processes,} \emph{Probability Theory and Related
  Fields}, 82, 451--488.

\bibitem[\protect\citeauthoryear{Resnick}{Resnick}{1987}]{resnick:1987}
\textsc{Resnick, S.} (1987): \emph{Extreme Values, Regular Variation and Point
  Processes}, New York: Springer-Verlag.

\bibitem[\protect\citeauthoryear{Resnick, Samorodnitsky, and Xue}{Resnick
  et~al.}{1999}]{resnick:samorodnitsky:xue:1999}
\textsc{Resnick, S., G.~Samorodnitsky, and F.~Xue} (1999): \enquote{How
  misleading can sample ACFs of stable MAs be? (Very!),} \emph{The Annals of
  Applied Probability}, 9, 797--817.

\bibitem[\protect\citeauthoryear{Resnick, Samorodnitsky, and Xue}{Resnick
  et~al.}{2000}]{resnick:samorodnitsky:xue:2000}
---\hspace{-.1pt}---\hspace{-.1pt}--- (2000): \enquote{Growth rates of sample
  covariances of stationary symmetric $\alpha$-stable processes associated with
  null recurrent Markov chains,} \emph{Stochastic Processes and Their
  Applications}, 85, 321--339.

\bibitem[\protect\citeauthoryear{Resnick and {\rm Van Den Berg}}{Resnick and
  {\rm Van Den Berg}}{2000}]{resnick:vandenberg:2000}
\textsc{Resnick, S. and E.~{\rm Van Den Berg}} (2000): \enquote{Sample
  correlation behavior for the heavy tailed general bilinear process,}
  \emph{Stochastic Models}, 16, 233--258.

\bibitem[\protect\citeauthoryear{Rosi\'nski}{Rosi\'nski}{1990}]{rosinski:1990}
\textsc{Rosi\'nski, J.} (1990): \enquote{On series representations of
  infinitely divisible random vectors,} \emph{The Annals of Probability}, 18,
  405--430.

\bibitem[\protect\citeauthoryear{Rosi\'nski and Samorodnitsky}{Rosi\'nski and
  Samorodnitsky}{1993}]{rosinski:samorodnitsky:1993}
\textsc{Rosi\'nski, J. and G.~Samorodnitsky} (1993): \enquote{Distributions of
  subadditive functionals of sample paths of infinitely divisible processes,}
  \emph{The Annals of Probability}, 21, 996--1014.

\bibitem[\protect\citeauthoryear{Rosi\'nski and \.Zak}{Rosi\'nski and
  \.Zak}{1996}]{rosinski:zak:1996}
\textsc{Rosi\'nski, J. and T.~\.Zak} (1996): \enquote{Simple conditions for
  mixing of infinitely divisible processes,} \emph{Stochastic Processes and
  Their Applications}, 61, 277--288.

\bibitem[\protect\citeauthoryear{Roy}{Roy}{2008}]{roy:2008}
\textsc{Roy, E.} (2008): \enquote{Ergodic properties of Poissonian ID
  processes,} \emph{The Annals of Probability}, 35, 551--576.

\bibitem[\protect\citeauthoryear{Samorodnitsky}{Samorodnitsky}{2004}]{samorodnitsky:2004}
\textsc{Samorodnitsky, G.} (2004): \enquote{Extreme value theory, ergodic
  theory, and the boundary between short memory and long memory for stationary
  stable processes,} \emph{The Annals of Probability}, 32, 1438--1468.

\bibitem[\protect\citeauthoryear{Samorodnitsky}{Samorodnitsky}{2005}]{samorodnitsky:2005}
---\hspace{-.1pt}---\hspace{-.1pt}--- (2005): \enquote{Null flows, positive
  flows and the structure of stationary symmetric stable processes,} \emph{The
  Annals of Probability}, 33, 1782--1803.

\bibitem[\protect\citeauthoryear{Samorodnitsky and Szulga}{Samorodnitsky and
  Szulga}{1989}]{samorodnitsky:szulga:1989}
\textsc{Samorodnitsky, G. and J.~Szulga} (1989): \enquote{An asymptotic
  evaluation of the tail of a multiple symmetric $\alpha$-stable integral,}
  \emph{The Annals of Probability}, 17, 1503--1520.

\bibitem[\protect\citeauthoryear{Samorodnitsky and Taqqu}{Samorodnitsky and
  Taqqu}{1994}]{samorodnitsky:taqqu:1994}
\textsc{Samorodnitsky, G. and M.~Taqqu} (1994): \emph{Stable Non-Gaussian
  Random Processes}, New York: Chapman and Hall.

\bibitem[\protect\citeauthoryear{Thaler}{Thaler}{2000}]{thaler:2000}
\textsc{Thaler, M.} (2000): \enquote{The asymptotics of the Perron-Frobenius
  operator for a class of interval maps preserving infinite measures,}
  \emph{Studia Mathematica}, 143, 103--119.

\bibitem[\protect\citeauthoryear{Thaler}{Thaler}{2001}]{thaler:2001}
---\hspace{-.1pt}---\hspace{-.1pt}--- (2001): \emph{Infinite ergodic theory},
  The Dynamic Odyssey course, CIRM.

\bibitem[\protect\citeauthoryear{Thaler and Zweim\"uller}{Thaler and
  Zweim\"uller}{2006}]{thaler:zweimuller:2006}
\textsc{Thaler, M. and R.~Zweim\"uller} (2006): \enquote{Distributional limit
  theorems in infinite ergodic theory,} \emph{Probability Theory and Related
  Fields}, 135, 15--52.

\bibitem[\protect\citeauthoryear{Zweim\"uller}{Zweim\"uller}{2000}]{zweimuller:2000}
\textsc{Zweim\"uller, R.} (2000): \enquote{Ergodic properties of infinite
  measure-preserving interval maps with indifferent fixed points,}
  \emph{Ergodic Theory and Dynamical Systems}, 20, 1519--1549.

\bibitem[\protect\citeauthoryear{Zweim\"uller}{Zweim\"uller}{2004}]{zweimuller:2004}
---\hspace{-.1pt}---\hspace{-.1pt}--- (2004): \enquote{$S$-unimodal Misiurewicz
  maps with flat critical points,} \emph{Fundamenta Mathematicae}, 181, 1--25.

\bibitem[\protect\citeauthoryear{Zweim\"uller}{Zweim\"uller}{2007}]{zweimuller:2007}
---\hspace{-.1pt}---\hspace{-.1pt}--- (2007): \enquote{Infinite measure
  preserving transformations with compact first regeneration,} \emph{Journal
  d'Analyse Math\'ematique}, 103, 93--131.

\bibitem[\protect\citeauthoryear{Zweim\"uller}{Zweim\"uller}{2009}]{zweimuller:2009}
---\hspace{-.1pt}---\hspace{-.1pt}--- (2009): \emph{Surrey Notes on Infinite
  Ergodic Theory}, Lecture notes, Surrey University.

\end{thebibliography}
\end{document}